\newcommand{\Z}{\mathbb{Z}}
\newcommand{\R}{\mathbb{R}}
\newcommand{\C}{\mathbb{C}}
\newcommand{\CP}{\mathbb{C}\mathbf{P}}
\newcommand{\SO}{\mathrm{SO}}
\newcommand{\OO}{\mathrm{O}}
\newcommand{\codim}{\mathrm{co}\!\dim}
\newcommand{\rank}{\mathrm{rank}}
\newtheorem{lemma}{Lemma}[section]
\newtheorem{proposition}[lemma]{Proposition}
\newtheorem{theorem}[lemma]{Theorem}
\newtheorem{corollary}[lemma]{Corollary}
\theoremstyle{definition}
\newtheorem{remark}[lemma]{Remark}
\newtheorem{definition}[lemma]{Definition}
\begin{document}
\parindent0em
\setlength\parskip{.1cm}
\title[Intersection homology of linkage spaces]{Intersection homology of linkage spaces in odd dimensional Euclidean space}
\author{Dirk Sch\"utz}
\address{Department of Mathematical Sciences\\ University of Durham\\ Science Laboratories\\ South Rd\\ Durham DH1 3LE\\ United Kingdom}
\email{dirk.schuetz@durham.ac.uk}
\begin{abstract}
We consider the moduli spaces $\mathcal{M}_d(\ell)$ of a closed linkage with $n$ links and prescribed lengths $\ell\in \R^n$ in $d$-dimensional Euclidean space. For $d>3$ these spaces are no longer manifolds generically, but they have the structure of a pseudomanifold.

We use intersection homology to assign a ring to these spaces that can be used to distinguish the homeomorphism types of $\mathcal{M}_d(\ell)$ for a large class of length vectors. These rings behave rather differently depending on whether $d$ is even or odd, with the even case having been treated in an earlier paper. The main difference in the odd case comes from an extra generator in the ring which can be thought of as an Euler class of a startified bundle.
\end{abstract}
\maketitle

\section{Introduction}
In this paper we continue our studies of the moduli spaces of closed $n$-gon linkages in high dimensional Euclidean space. These spaces are determined by a \em length vector \em $\ell\in \R^n$ such that all entries are positive. More precisely, the moduli space we are interested in is
\begin{eqnarray*}
 \mathcal{M}_d(\ell)&=&\left\{ (x_1,\ldots,x_n)\in (S^{d-1})^n\,\left|\, \sum_{i=1}^n \ell_ix_i=0\right\}\right/ \SO(d)
\end{eqnarray*}
where $\SO(d)$ acts diagonally on the product of spheres. A natural question is how the topology of $\mathcal{M}_d(\ell)$ depends on $\ell$, and one of the first observations is that $\R^n$ is divided into finitely many chambers such that length vectors in the same chamber lead to homeomorphic moduli spaces. One may then ask whether length vectors from different chambers (up to permutation of coordinates) have different moduli spaces.

In the planar case $d=2$ Walker \cite{walker} conjectured that the cohomology ring of these spaces is enough to distinguish them, which was then confirmed by Farber, Hausmann and the author in \cite{fahasw, schuew}. Furthermore, in \cite{fahasw} this was also shown for $d=3$, with the single exception of $n=4$ where for two different chambers the moduli space is the $2$-sphere. Indeed, it follows from the work of Schoenberg \cite{schoen} that for $n=d+1$ each non-empty moduli space is a sphere, and for $n\leq d$ the non-empty ones are discs. The homology calculations of \cite{schuet} indicate that for $n\geq d+2$ the topology does depend on $\ell$, but they also show that homology and cohomology are not enough to distinguish them. In \cite{schint} the author used intersection homology to distinguish moduli spaces of a large class of length vectors for $d\geq 4$ even. Here we show that this approach also works for $d\geq 5$ odd.

The main theorem we thus get for the topology of moduli spaces is the following.

\begin{theorem}
 \label{main_theorem}
Let $d\geq 2$, $\ell,\ell'\in \R^n$ be generic, $d$-regular length vectors. If $\mathcal{M}_d(\ell)$ and $\mathcal{M}_d(\ell')$ are homeomorphic, then $\ell$ and $\ell'$ are in the same chamber up to a permutation.
\end{theorem}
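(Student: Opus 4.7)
The plan is to reduce the theorem to the case $d \geq 5$ odd, since the planar case is \cite{fahasw,schuew}, the case $d=3$ is \cite{fahasw}, and the case $d \geq 4$ even is \cite{schint}; the genericity and $d$-regularity hypotheses should eliminate the exceptional $n=4$, $d=3$ configuration mentioned in the introduction. For odd $d$, I would follow the blueprint of the even-dimensional treatment in \cite{schint}: attach to each moduli space a ring built from middle-perversity intersection cohomology, verify that this ring is a homeomorphism invariant, and extract from its multiplicative structure enough combinatorial data to recover the chamber of $\ell$ up to permutation of coordinates.

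The first concrete step is a careful description of the stratification of $\mathcal{M}_d(\ell)$ as a pseudomanifold. The singular strata are indexed by those subsets $J \subset \{1,\ldots,n\}$ whose associated subconfiguration can be made collinear, and their normal structure depends only on the short/long pattern determined by $\ell$. Middle-perversity intersection cohomology $IH^\ast(\mathcal{M}_d(\ell);\Q)$ then becomes a graded commutative ring under the intersection product, and as such is a topological invariant of $\mathcal{M}_d(\ell)$.

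The second step is to identify generators and relations. In the even case the generators are hyperplane-type classes $R_J$ associated to short subsets, yielding essentially a polynomial ring modulo an explicit ideal. In the odd case the abstract announces one extra generator, which on geometric grounds should be the Euler class of a stratified $(d-1)$-sphere bundle obtained by singling out one link; this is precisely the new ingredient absent in even dimensions. Once this generator is introduced one computes its relations with the hyperplane classes and with itself, producing a combinatorial presentation of $IH^\ast(\mathcal{M}_d(\ell);\Q)$ entirely in terms of the short subsets of $\ell$. Since the chamber of a generic $d$-regular $\ell$ is determined by its short subsets, this presentation will determine the chamber provided one can recover the short subsets from the ring alone.

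The main obstacle I anticipate is purely algebraic: given only an abstract ring isomorphism between $IH^\ast(\mathcal{M}_d(\ell);\Q)$ and $IH^\ast(\mathcal{M}_d(\ell');\Q)$, one must recover the underlying combinatorial data. This requires singling out the Euler generator and the hyperplane generators by ring-theoretic invariants — degree, annihilator ideals, behaviour under multiplication, or structure of the subalgebra of nilpotents — since permutations of coordinates are only one source of automorphisms. Ruling out spurious ring isomorphisms that mix the Euler class with hyperplane classes, or that exchange hyperplane classes associated to distinct strata, is the delicate structural analysis that goes beyond the even-dimensional argument and where the odd case demands genuinely new input.
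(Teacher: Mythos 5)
Your reduction to the case $d\geq 5$ odd, the decision to work with an intersection-homology ring, and the identification of the extra generator as an Euler class of a stratified bundle all match the paper's strategy. However, the last paragraph of your proposal names exactly the step that carries the real content of the argument and then stops short of it: you observe that one must ``recover the underlying combinatorial data'' from an abstract ring isomorphism and that one must single out the Euler generator, but you do not say how. That is not a technicality to be filled in later; it is the heart of the odd-dimensional case, and without it the proposal is incomplete. Three specific points are off or missing.

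First, the coefficient ring matters: the paper does not work with $\Q$-coefficient ``middle perversity'' intersection cohomology. It builds a bigraded object $\bigoplus_{r} I^{\mathbf{p}_r}H_{\mathbf{d}^{n-r}_d}(\mathcal{M}_d(\ell))$ out of a specific family of (non-middle) perversities $\mathbf{p}_r(k)=rk$, and then crucially tensors with $\Z/2$. Over $\Q$ the relation $RX_i = X_i^2$ carries signs and the quotient is not an exterior face ring, so the combinatorial rigidity input is not available.

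Second, the Euler class $R$ is not singled out by some unspecified ring-theoretic invariant; the paper proves (when $a_2(\ell)>0$) that, after tensoring with $\Z/2$, $R$ is the \emph{unique} element of $I^{\mathbf{p}_1}H_{\mathbf{d}^{n-1}_d}(\mathcal{M}_d(\ell))\otimes\Z/2$ for which multiplication induces the squaring map into $I^{\mathbf{p}_2}H_{\mathbf{d}^{n-2}_d}\otimes\Z/2$. This uniqueness argument is delicate: it uses the explicit Poincar\'e duals $Y_J$ of Lemma~\ref{lem_duals}, the geometric class $Y$ of Lemma~\ref{lem_geoconst}, and the non-vanishing $X_iX_j^-\neq 0$ of Corollary~\ref{cor_non-zero}, which in turn rests on the Morse-theoretic split-injectivity in Lemma~\ref{lem_direct_sum}. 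The degenerate case $a_2(\ell)=0$ must be handled separately (there the chamber is already determined by $a_1(\ell)=\dim I^{\mathbf{p}_1}H_{\mathbf{d}^{n-1}_d}\otimes\Z/2$).

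Third, once $R$ is pinned down, the paper passes to the quotient $I\!H^\ast_{\Z/2}(\mathcal{M}_d(\ell))/\langle R\rangle\cong\Lambda_{\Z/2}[\mathcal{S}_\ast(\ell)]$ (Corollary~\ref{cor_thefacering}) and invokes Gubeladze's isomorphism theorem for monoid rings \cite{gubela} to recover the simplicial complex $\mathcal{S}_\ast(\ell)$, and hence the chamber, from the ring. Your worry about ``spurious ring isomorphisms that exchange hyperplane classes'' is addressed precisely by this rigidity theorem, not by an ad hoc analysis of nilpotents. Without naming these ingredients the proposal is a plan for a proof, not a proof.
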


For the precise definition of generic and $d$-regular we refer the reader to Section \ref{sec_baisc}. As mentioned above, the remaining case is when $d\geq 5$ is odd. For $d\geq 4$ the moduli spaces are no longer manifolds, so in \cite{schint} a substitute for the cohomology ring using intersection homology was defined. For $d$ even specific generators were found and it was shown that this ring is an exterior face ring similar to the situation when $d=2$.

For odd $d$ this ring has an extra generator, which makes the determination of the ring more difficult. However, this is the same situation as in the case $d=3$ where the cohomology ring was calculated by Hausmann and Knutson \cite{hauknu}. While we do not determine the ring completely, we do obtain enough information to mimick the proof used in \cite{fahasw} to get the result for $d=3$, which relied on the cohomology description of \cite{hauknu}. A crucial observation is that the extra generator can be thought of as an Euler class of a certain stratified bundle over $\mathcal{M}_d(\ell)$ from which its multiplication with the other generators can be deduced.

Similar results to Theorem \ref{main_theorem} have been obtained in \cite{fahasc} and by Farber and Fromm in \cite{farfro} for chain spaces and free polygon spaces, respectively. These spaces are closed manifolds for generic $\ell$ and all $d\geq 2$, and the proofs do not rely on distinguishing between even and odd $d$. It may be possible to give a unified proof of Theorem \ref{main_theorem} by using $\Z/2$ coefficients throughout.

One could ask whether the condition of $d$-regularity is necessary for $n>d+1$, as it is not necessary for $d=2,3$. It seems unlikely though that this can be attacked using intersection homology.

\section{Basic properties of linkage spaces}
\label{sec_baisc}
We define the \em chain space \em of a length vector $\ell$ as
\begin{eqnarray*}
 \mathcal{C}_d(\ell)&=&\left\{(x_1,\ldots,x_{n-1})\in (S^{d-1})^{n-1}\,\left|\, \sum_{i=1}^{n-1}\ell_i x_i=-\ell_ne_1\right\}\right.
\end{eqnarray*}
where $e_1=(1,0,\ldots,0)\in \R^d$ is the usual first coordinate vector. If we let $\SO(d-1)$ act on $S^{d-1}$ by fixing the first coordinate, we see that $\SO(d-1)$ acts diagonally on $\mathcal{C}_d(\ell)$ and
\begin{eqnarray*}
 \mathcal{M}_d(\ell)&\cong & \mathcal{C}_d(\ell)/\SO(d-1).
\end{eqnarray*}
We also define
\begin{eqnarray*}
 \mathcal{N}_d(\ell)&=&\left\{ (x_1,\ldots,x_n)\in (S^{d-1})^n\,\left|\, \sum_{i=1}^n \ell_ix_i=0\right\}\right/ \OO(d)
\end{eqnarray*}
so that $\mathcal{N}_d(\ell)\cong \mathcal{M}_d(\ell)/(\Z/2)$.

\begin{definition}
 Let $\ell\in \R^n$ be a length vector. A subset $J\subset \{1,\ldots,n\}$ is called \em $\ell$-short\em, if
\begin{eqnarray*}
 \sum_{j\in J}\ell_j&<& \sum_{i\notin J}\ell_i.
\end{eqnarray*}
It is called \em $\ell$-long\em, if the complement is $\ell$-short. If every such subset is either $\ell$-short or $\ell$-long, the length vector is called \em generic\em.

A length vector $\ell=(\ell_1,\ldots,\ell_n)$ is called \em ordered\em, if $\ell_1\leq \ell_2\leq\cdots\leq \ell_n$.
\end{definition}

After permuting the coordinates we can always assume that $\ell$ is ordered.

If $\ell\in \R^n$ is ordered and $k\leq n-3$, we write
\begin{eqnarray*}
 \mathcal{S}_k(\ell)&=& \{J\subset\{1,\ldots,n\}\,|\,n\in J, \, |J|=k+1, J \mbox{ is }\ell\mbox{-short}\}.
\end{eqnarray*}

The cardinality of these sets is denoted by
\begin{eqnarray*}
 a_k(\ell)&=&|\mathcal{S}_k(\ell)|.
\end{eqnarray*}

If $J\subset\{1,\ldots,n\}$, we define the hyperplane
\begin{eqnarray*}
 H_J&=&\left\{(x_1,\ldots,x_n)\in \R^n\,\left|\, \sum_{j\in J}x_j= \sum_{j\notin J}x_j\right\}\right.
\end{eqnarray*}
and let
\begin{eqnarray*}
 \mathcal{H}&=& \R^n_{>0} - \bigcup_{J\subset\{1,\ldots,n\}}H_J,
\end{eqnarray*}
where $\R^n_{>0}=\{(x_1,\ldots,x_n)\in \R^n\,|\,x_i>0\}$. Then $\mathcal{H}$ has finitely many components, which we call \em chambers\em. It is clear that a length vector $\ell$ is generic if and only if $\ell\in\mathcal{H}$. 

It is shown in \cite{hausma} that if $\ell$ and $\ell'$ are in the same chamber, then $\mathcal{C}_d(\ell)$ and $\mathcal{C}_d(\ell')$ are $\OO(d-1)$-equivariantly diffeomorphic. In particular, $\mathcal{M}_d(\ell)$ and $\mathcal{M}_d(\ell')$ are homeomorphic.

It is easy to see that two ordered generic length vectors $\ell$, $\ell'$ are in the same chamber if and only if $\mathcal{S}_k(\ell)=\mathcal{S}_k(\ell')$ for all $k=0,\ldots,n-3$.

\begin{definition}
 Let $\ell\in \R^n$ be a length vector and $d\geq 2$. Then $\ell$ is called $d$-regular, if
\begin{eqnarray*}
 \bigcap_{J\in \mathcal{L}^d(\ell)}J &\not=& \emptyset
\end{eqnarray*}
where $\mathcal{L}^d(\ell)$ are the subsets $J\subset\{1,\ldots,n\}$ with $d-1$ elements that are $\ell$-long. If $\mathcal{L}^d(\ell)=\emptyset$, we let the intersection above be $\{1,\ldots,n\}$.
\end{definition}

So for a length vector to be \em not \em $d$-regular, we need a long subset $J\subset \{1,\ldots,n\}$ with $d-1$ elements such that $J$ does not contain an element $m$ with $\ell_m$ maximal. Because then we can replace any element of $J$ with $m$ to get another $\ell$-long subset with $d-1$ elements, and the intersection of these sets will be empty.

If $\ell$ is ordered, then $\ell$ is $d$-regular if and only if $\{n-d+1,n-d+2,\ldots,n-1\}$ is not $\ell$-long. For a generic length vector this is equivalent to $\mathcal{S}_{n-d}(\ell)=\emptyset$.

It follows from the definition that every length vector with $n\geq 2$ is $2$-regular. Furthermore, there is only one chamber up to permutation which is not $3$-regular, namely the one containing\footnote{Technically, this $\ell$ is not a length vector because of $0$-entries. We interpret a $0$-entry in a length vector as $\varepsilon>0$ so small that decreasing it does not change the chamber.} $\ell=(0,\ldots,0,1,1,1)$. In \cite{fahasc}, $4$-regular was called normal.

In the case $d=n-1$, there are only two generic $d$-regular length vectors $\ell\in \R^n$ up to permutation, namely $\ell=(1,\ldots,1,n-2)$ and $\ell'=(0,\ldots,0,1)$. If $n$ is large compared to $d$, $d$-regularity gets more common, and we would expect the ratio of all $d$-regular length vectors in $\R^n$ by all length vectors in $\R^n$ to converge to $1$.

For $d=2,3$ and $\ell$ generic, the spaces $\mathcal{M}_d(\ell)$ are obtained as a quotient space from $\mathcal{C}_d(\ell)$ using a free action, so they are closed manifolds. For $d\geq 4$ this is no longer the case and we get different orbit types. Let $x\in \mathcal{M}_d(\ell)$ be represented by $(x_1,\ldots,x_n)\in (S^{d-1})^n$. If we think of this as a $(d\times n)$-matrix, the rank of this matrix does not depend on the representative of $x$.

\begin{definition}
 Let $\ell\in \R^n_{>0}$ be a generic length vector and $x\in\mathcal{M}_d(\ell)$. Then the \em rank of $x$\em, $\rank(x)$, is the rank of a $(d\times n)$-matrix representing $x$.
\end{definition}

For $k< d$ we have an inclusion $\mathcal{N}_k(\ell)\to \mathcal{M}_d(\ell)$ and $\mathcal{N}_k(\ell)$ are exactly those points with rank at most $k$. It was shown in \cite{schint} that these subsets form a stratification of $\mathcal{M}_d(\ell)$ and $\mathcal{M}_d(\ell)$ is a pseudomanifold for $n\geq d+1$ . Note that $\SO(d-1)$ acts freely on points of rank $d-1$ in $\mathcal{C}_d(\ell)$, so that the regular set is $\mathcal{M}_d(\ell)-\mathcal{N}_{d-2}(\ell)$.

The stratification we will look at is therefore given by
\[
 \emptyset \subset \mathcal{N}_2(\ell)\subset \mathcal{N}_3(\ell)\subset\cdots \subset \mathcal{N}_{d-2}(\ell)\subset \mathcal{M}_d(\ell).
\]
The singular strata are therefore given by $\mathcal{N}_k(\ell)-\mathcal{N}_{k-1}(\ell)$ for $k=2,\ldots,d-2$ and they can easily seen to be connected.  The dimension of $\mathcal{M}_d(\ell)$ (and $\mathcal{N}_d(\ell)$) for $n\geq d$ is given by
\begin{eqnarray*}
 \mathbf{d}^n_d&=&(n-3)(d-1)-\frac{(d-2)(d-3)}{2},
\end{eqnarray*}
see \cite{schuet}.

We need to recall a few basics from intersection homology.

\begin{definition}
 Let $X$ be a stratified pseudomanifold, a (general) \em perversity \em is a function
\[
 \mathbf{p}\colon \{\mbox{singular strata of }X\} \to \Z.
\]
\end{definition}
For our purposes, this mainly means functions $\mathbf{p}\colon \{2,\ldots,d-2\}\to \Z$ where $k\in \{2,\ldots,d-2\}$ corresponds to the stratum $\mathcal{N}_{d-k}(\ell)-\mathcal{N}_{d-k-1}(\ell)$. Also, we are mostly interested in \em Goresky-MacPherson perversities\em, which for us means that
\begin{eqnarray*}
 \mathbf{p}(2) &\leq & 2(n-d)-1 \\
 \mathbf{p}(k+1)-\mathbf{p}(k) & \leq & n-d+k,
\end{eqnarray*}
compare \cite{schint}. The top perversity $\mathbf{t}_n=\mathbf{t}$ is then given by
\begin{eqnarray*}
 \mathbf{t}(k)&=& \mathbf{c}^n_{d,k}-2
\end{eqnarray*}
for all $k\in\{2,\ldots,d-2\}$, where $\mathbf{c}^n_{d,k}$ denotes the codimension of the stratum $\mathcal{N}_{d-k}(\ell)-\mathcal{N}_{d-k-1}(\ell)$, and which is given by
\begin{eqnarray*}
 \mathbf{c}_{d,k}^n & = & k(n-d) + \frac{k(k-1)}{2}.
\end{eqnarray*}
The perversities we need for $\mathcal{M}_d(\ell)$ are given by $\mathbf{p}_j$ for $j=0,\ldots, n-d-1$ via the formula
\begin{eqnarray*}
 \mathbf{p}_j(k)&=&j\cdot k.
\end{eqnarray*}
It is easy to check that these perversities are Goresky-MacPherson, and that $\mathbf{p}_i+\mathbf{p}_j=\mathbf{p}_{i+j}$ for $i+j \leq n-d-1$.

Given a perversity $\mathbf{p}$, we denote the resulting intersection homology by
\[
 I^\mathbf{p}H_\ast(X)
\]
where the definition is as in \cite{gormac}.

For each perversity we are usually only interested in the intersection homology in a particular degree, and this group is $$I^{\mathbf{p}_j}H_{\mathbf{d}^{n-j}_d}(\mathcal{M}_d(\ell)).$$
Note that $\mathbf{p}_0$ is the zero perversity, so this group contains the fundamental class of $\mathcal{M}_d(\ell)$ for $j=0$. Similarly, if we form a length vector $\ell_J\in \R^{n-|J|}$ from an ordered length vector $\ell$ using a subset $J\subset \{1,\ldots,n-1\}£$ by linking together the elements of $J$ with $n$, the group $I^{\mathbf{p}_j}H_{\mathbf{d}^{n-j}_d}(\mathcal{M}_d(\ell))$ contains the fundamental class of $\mathcal{M}_d(\ell_J)$ for $j=|J|$. Note that we need $|J|\leq n-d-1$ in order for $\mathcal{M}_d(\ell_J)$ to be a pseudomanifold. The condition of $d$-regularity of $\ell$ implies that $\mathcal{M}_d(\ell_J)=\emptyset$ for each $J\subset \{1,\ldots,n-1\}$ with $|J|\geq n-d$.

The following result is proven in \cite{schint}.

\begin{proposition}\label{odd_generators}
 Let $d\geq 5$ be odd, $n\geq d+2$ and $\ell\in \R^n$ a generic length vector with $\mathcal{M}_d(\ell)\not=\emptyset$. Then
\begin{eqnarray*}
 I^{\mathbf{p}_1}H_{\mathbf{d}^{n-1}_d}(\mathcal{M}_d(\ell))&\cong & \Z^{1+a_1(\ell)}.
\end{eqnarray*}

\end{proposition}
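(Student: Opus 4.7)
The plan is to construct $1+a_1(\ell)$ explicit linearly independent classes in $I^{\mathbf{p}_1}H_{\mathbf{d}^{n-1}_d}(\mathcal{M}_d(\ell))$ and then to bound this group above by the same number via a stratification spectral sequence.

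First, I would exhibit the $a_1(\ell)$ ``combinatorial'' classes. For each $J=\{j,n\}\in\mathcal{S}_1(\ell)$, shortness of $\{j,n\}$ ensures that the reduced length vector $\ell_J\in\R^{n-1}$ obtained by merging the $j$th and $n$th entries is again generic and that $\mathcal{M}_d(\ell_J)$ is a nonempty pseudomanifold of dimension $\mathbf{d}^{n-1}_d$. The diagonal embedding $\iota_J\colon\mathcal{M}_d(\ell_J)\hookrightarrow\mathcal{M}_d(\ell)$ forcing $x_j=x_n$ meets each singular stratum of $\mathcal{M}_d(\ell)$ in a subset of controlled dimension, and a direct check against $\mathbf{p}_1(k)=k$ and $\mathbf{c}^n_{d,k}=k(n-d)+k(k-1)/2$ shows that $\iota_{J,*}[\mathcal{M}_d(\ell_J)]$ is $\mathbf{p}_1$-allowable in the right dimension.

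Next I would build the extra ``Euler class'' generator. Over the regular set $\mathcal{M}_d(\ell)-\mathcal{N}_{d-2}(\ell)$ the projection $\mathcal{C}_d(\ell)^{reg}\to\mathcal{M}_d(\ell)^{reg}$ is a principal $\SO(d-1)$-bundle; for odd $d$ (hence $d-1$ even) the associated oriented real vector bundle $E$ of rank $d-1$ has a nontrivial Euler class $e(E)\in H^{d-1}(\mathcal{M}_d(\ell)^{reg};\Z)$. Poincar\'e duality on the top stratum turns $e(E)$ into a class in $H_{\mathbf{d}^{n-1}_d}(\mathcal{M}_d(\ell)^{reg})$, which extends across the singular strata as a $\mathbf{p}_1$-intersection cycle by the standard Deligne truncation argument, since the perversity bound $\mathbf{p}_1(2)\geq 0$ is easily met in the first truncation step and propagates upward.

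Linear independence of these $1+a_1(\ell)$ classes would follow from intersection pairings in the regular stratum: the fundamental classes $\iota_{J,*}[\mathcal{M}_d(\ell_J)]$ are distinguished by transverse cycles localized near each image, while the Euler class is detected in a trivializing open set away from every $\iota_J$. Spanning is the main obstacle and is what I expect to be hardest. Following the blueprint of \cite{schint} for the even case, I would run the spectral sequence computing $I^{\mathbf{p}_1}H_*(\mathcal{M}_d(\ell))$ from the stratification $\mathcal{N}_2(\ell)\subset\cdots\subset\mathcal{N}_{d-2}(\ell)\subset\mathcal{M}_d(\ell)$, whose $E_1$-page involves the cohomology of the links of each stratum. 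The only arithmetic change from the even case is the extra summand $H^{d-1}(\SO(d-1);\Z)=\Z$ present when $d-1$ is even, which contributes precisely one additional class in total degree $\mathbf{d}^{n-1}_d$; verifying that no differential kills this extra generator is the delicate point, and the explicit Euler class constructed above pinpoints the surviving cycle and so makes this control possible.
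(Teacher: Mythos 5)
The paper does not actually prove Proposition~\ref{odd_generators} --- it is quoted verbatim from \cite{schint} (``The following result is proven in \cite{schint}''), so there is no in-paper argument to compare your proposal against. With that caveat, your identification of the explicit generators is on the right track and matches what the paper later uses: the classes $X_j=[\mathcal{M}_d(\ell^j_+)]$ for $\{j,n\}$ $\ell$-short are exactly the $a_1(\ell)$ combinatorial classes, and the remaining generator is the Euler class $R$ of the stratified bundle $p\colon \overline{\mathcal{M}}_d(\ell)=\mathcal{C}_d(\ell)\times_{\SO(d-1)}\R^{d-1}\to\mathcal{M}_d(\ell)$ that Section~\ref{sec_bundles} constructs concretely as a self-intersection, rather than via Deligne truncation.

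However, the two load-bearing steps in your proposal have real gaps. First, the detection of the Euler class ``in a trivializing open set away from every $\iota_J$'' is backwards: the restriction of $e(E)$ to a trivializing open set is zero, so no such local pairing can see it. The paper's mechanism (Lemmas~\ref{lem_duals} and~\ref{lem_geoconst}) pairs $R$ against a global test cycle $Y$ built so that $X_{n-1}\cdot Y=0$ while $X_{n-1}^-\cdot Y=1$, giving $R\cdot Y=(X_{n-1}+X_{n-1}^-)\cdot Y=1$; this uses the relation $R=X_j+X_j^-$, not a local trivialization. Second, the upper bound via a stratification spectral sequence is asserted but not really supported, and the arithmetic you cite is incorrect: $H^{d-1}(\SO(d-1);\Z)$ is \emph{not} $\Z$ for $d-1$ even (e.g.\ for $d=5$ one has $H^4(\SO(4);\Z)=0$), and the Euler class lives in $H^{d-1}(\B\SO(d-1))$ rather than $H^{d-1}(\SO(d-1))$ --- and even that group is not cyclic in general. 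The rank computation in \cite{schint} is done not through the stratification spectral sequence with link cohomology but through a Morse--Bott filtration of $\mathcal{M}_d(\ell)$ (cf.\ the proof of Lemma~\ref{lem_direct_sum} in this paper, which reproduces the relevant relative groups $I^{\mathbf{p}}H_*(\mathcal{N}^{m,k},\partial_-\mathcal{N}^{m,k})$ stage by stage); these local models, not $\SO(d-1)$ itself, produce the one extra $\Z$ when $d$ is odd. So the spanning half of your argument would need to be replaced or substantially reworked before it is a proof.
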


If for every $j\in \{1,\ldots,n-1\}$ with $\{j,n\}$ $\ell$-short we set
\begin{eqnarray*}
 \ell^j_+&=&(\ell_1,\ldots,\hat{\ell}_j,\ldots,\ell_n+\ell_j)\,\,\,\in\,\,\, \R^{n-1},
\end{eqnarray*}
then the fundamental classes $[\mathcal{M}_d(\ell^j_+)]\in I^{\mathbf{p}_1}H_{\mathbf{d}^{n-1}_d}(\mathcal{M}_d(\ell))$ are linearly independent (assuming that $\ell$ is ordered), as shown in \cite{schint}. The notation $\hat{\ell}_j$ is supposed to indicate that this entry is omitted. It remains to find one more generator for this group.

\section{Stratified bundles over linkage spaces}
\label{sec_bundles}
Recall that $\mathcal{M}_d(\ell)$ can be viewed as $\mathcal{C}_d(\ell)/\SO(d-1)$ where $\SO(d-1)$ acts diagonally on the left of $(S^{d-1})^n$, fixing the first coordinate in $\R^d$. We now form the space
\begin{eqnarray*}
 \overline{\mathcal{M}}_d(\ell)&=&\mathcal{C}_d(\ell)\times_{\SO(d-1)}\R^{d-1}
\end{eqnarray*}
which is the quotient space of $\mathcal{C}_d(\ell)\times \R^{d-1}$ using the equivalence relation given by $(x,v)\sim (Ax,Av)$ for $A\in \SO(d-1)$, $x\in \mathcal{C}_d(\ell)$ and $v\in \R^{d-1}$ with the standard action of $\SO(d-1)$ on $\R^{d-1}$.

There is a projection $p\colon \overline{\mathcal{M}}_d(\ell)\to \mathcal{M}_d(\ell)$ given by $p([x,v])=[x]$, which can be viewed as a stratified fibre bundle, see Remark \ref{rem_stratfibbun} below. For now it will be good enough that $\overline{\mathcal{M}}_d(\ell)$ is a pseudomanifold, so we begin by finding the right stratification.

For $k=2,\ldots,d-2$ let
\begin{eqnarray*}
 \overline{\mathcal{N}}_{d-k}(\ell)&=&p^{-1}(\mathcal{N}_{d-k}(\ell)).
\end{eqnarray*}
Note that if $[x,v]\in \overline{\mathcal{N}}_{d-k}(\ell)$, then there is $A\in \SO(d-1)$ with $Ax\in \mathcal{C}_{d-k}(\ell)$ and $Av\in \R^{d-k}$, where $\R^{d-k}\subset \R^{d-1}$ so that the last $k-1$ coordinates are $0$. Let
\begin{eqnarray*}
 \overline{\mathcal{N}}_{d-k}^{d-k-1}(\ell)&=&\{[x,v]\in \overline{\mathcal{M}}_d(\ell)\,|\, x\in \mathcal{C}_{d-k}(\ell), \, v\in \R^{d-k-1}\}.
\end{eqnarray*}
Then
\[
 \overline{\mathcal{N}}_{d-k}^{d-k-1}(\ell) \subset \overline{\mathcal{N}}_{d-k}(\ell)\subset \overline{\mathcal{N}}_{d-k+1}^{d-k}(\ell)
\]
for $k=3,\ldots,d-2$ and
\[
 \overline{\mathcal{N}}_{d-2}^{d-3}(\ell)\subset \overline{\mathcal{N}}_{d-2}(\ell)\subset \overline{\mathcal{M}}_d(\ell).
\]
Also, $p|\colon \overline{\mathcal{M}}_d(\ell)-\overline{\mathcal{N}}_{d-2}(\ell)\to \mathcal{M}_d(\ell)-\mathcal{N}_{d-2}(\ell)$ is a vector bundle, and one checks that $\overline{\mathcal{M}}_d(\ell)$ is a pseudomanifold with the stratification
\[
 \emptyset \subset \overline{\mathcal{N}}_2^1(\ell) \subset \overline{\mathcal{N}}_2(\ell) \subset \overline{\mathcal{N}}_3^2(\ell) \subset \cdots \subset \overline{\mathcal{N}}_{d-2}^{d-3}(\ell) \subset \overline{\mathcal{N}}_{d-2}(\ell)\subset \overline{\mathcal{M}}_d(\ell).
\]
The codimensions of the strata are
\begin{eqnarray*}
 \codim(\overline{\mathcal{N}}_{d-k}(\ell)-\overline{\mathcal{N}}_{d-k}^{d-k-1}(\ell))&=& \mathbf{c}^n_{d,k}+k-1\\
 \codim(\overline{\mathcal{N}}_{d-k}^{d-k-1}(\ell)-\overline{\mathcal{N}}_{d-k-1}(\ell))&=& \mathbf{c}^n_{d,k}+k
\end{eqnarray*}
for $k=2,\ldots,d-2$.

Recall the perversities $\mathbf{p}_r$ for $\mathcal{M}_d(\ell)$ and $0\leq r\leq n-d-1$ given by
\begin{eqnarray*}
 \mathbf{p}_r(\mathcal{N}_{d-k}(\ell)-\mathcal{N}_{d-k-1}(\ell))&=& k\cdot r,
\end{eqnarray*}
$k=2,\ldots,d-2$. Similarly, we define the perversities $\mathbf{q}_r$ for $\overline{\mathcal{M}}_d(\ell)$ by
\begin{eqnarray*}
 \mathbf{q}_r(\overline{\mathcal{N}}_{d-k}(\ell)-\overline{\mathcal{N}}_{d-k}^{d-k-1}(\ell))&=&r\cdot k -1\\
 \mathbf{q}_r(\overline{\mathcal{N}}_{d-k}^{d-k-1}(\ell)-\overline{\mathcal{N}}_{d-k-1}(\ell))&=& r\cdot k.
\end{eqnarray*}

The inclusion $i\colon \mathcal{M}_d(\ell)\to \overline{\mathcal{M}}_d(\ell)$ given by $i([x])=[x,0]$ is stratum preserving, as $i(\mathcal{N}_{d-k}(\ell)-\mathcal{N}_{d-k-1}(\ell))\subset \overline{\mathcal{N}}_{d-k}^{d-k-1}(\ell)- \overline{\mathcal{N}}_{d-k-1}(\ell)$ and since
\begin{multline*}
 \mathbf{p}_r(\mathcal{N}_{d-k}(\ell)-\mathcal{N}_{d-k-1}(\ell)) - \codim(\mathcal{N}_{d-k}(\ell)-\mathcal{N}_{d-k-1}(\ell)) \\
\shoveright{\,\,\,=\,\,\, k\cdot r - \mathbf{c}_{d-k}^n \,\,\,=\,\,\, k(r+1)-\mathbf{c}^n_{d,k}-k} \\ 
\,\,\, = \,\,\, \mathbf{q}_{r+1}(\overline{\mathcal{N}}_{d-k}^{d-k-1}(\ell)-\overline{\mathcal{N}}_{d-k-1}(\ell)) - \codim(\overline{\mathcal{N}}_{d-k}^{d-k-1}(\ell)-\overline{\mathcal{N}}_{d-k-1}(\ell))
\end{multline*}
the inclusion induces a map on intersection homology
\[
 i_\ast\colon I^{\mathbf{p}_r}H_\ast(\mathcal{M}_d(\ell)) \to I^{\mathbf{q}_{r+1}}H_\ast(\overline{\mathcal{M}}_d(\ell))
\]
by \cite[Lm.5.2]{schint}.

The projection $p\colon \overline{\mathcal{M}}_d(\ell)\to \mathcal{M}_d(\ell)$ satisfies
\begin{eqnarray*}
 p(\overline{\mathcal{N}}_{d-k}^{d-k-1}(\ell)-\overline{\mathcal{N}}_{d-k-1}(\ell))&\subset & \mathcal{N}_{d-k}(\ell)-\mathcal{N}_{d-k-1}(\ell)\\
 p(\overline{\mathcal{N}}_{d-k}(\ell)-\overline{\mathcal{N}}_{d-k}^{d-k-1}(\ell)) &=& \mathcal{N}_{d-k}(\ell)-\mathcal{N}_{d-k-1}(\ell)
\end{eqnarray*}
with
\begin{multline*}
 \mathbf{q}_{r+1}(\overline{\mathcal{N}}_{d-k}(\ell)-\overline{\mathcal{N}}_{d-k}^{d-k-1}(\ell))- \codim(\overline{\mathcal{N}}_{d-k}(\ell)-\overline{\mathcal{N}}_{d-k}^{d-k-1}(\ell)) \\
\shoveright{(r+1)k-1 - \mathbf{c}^n_{d,k} - k + 1 \,\,\, = \,\,\, r\cdot k-\mathbf{c}^n_{d,k} } \\
\,\,\,=\,\,\, \mathbf{p}_r(\mathcal{N}_{d-k}(\ell)-\mathcal{N}_{d-k-1}(\ell)) - \codim(\mathcal{N}_{d-k}(\ell)-\mathcal{N}_{d-k-1}(\ell)),
\end{multline*}
so projection induces a homomorphism
\[
 p_\ast \colon I^{\mathbf{q}_{r+1}}H_\ast(\overline{\mathcal{M}}_d(\ell)) \to I^{\mathbf{p}_r}H_\ast(\mathcal{M}_d(\ell))
\]
which is seen to be the inverse of $i_\ast$ as the straight-line homotopy between $i\circ p$ and the identity on $\overline{\mathcal{M}}_d(\ell)$ induces the required chain homotopy.

The pseudomanifold $\overline{\mathcal{M}}_d(\ell)$ is non-compact, but we can form a similar compact pseudomanifold by letting
\begin{eqnarray*}
 \hat{\mathcal{M}}_d(\ell)&=& \mathcal{C}_d(\ell)\times_{\SO(d-1)}D^{d-1}\\
 \partial\hat{\mathcal{M}}_d(\ell)&=&\mathcal{C}_d(\ell)\times_{\SO(d-1)}S^{d-2}
\end{eqnarray*}
and
\begin{eqnarray*}
 \tilde{\mathcal{M}}_d(\ell)&=& \hat{\mathcal{M}}_d(\ell) / \partial\hat{\mathcal{M}}_d(\ell)
\end{eqnarray*}
with extra stratum $\ast$ corresponding to $\partial\hat{\mathcal{M}}_d(\ell)$. If we extend the perversity $\mathbf{q}_r$ by defining
\begin{eqnarray*}
 \mathbf{q}_r(\ast)&=& r(d-2),
\end{eqnarray*}
we then have
\begin{eqnarray}\label{eqn_thom}
 I^{\mathbf{q}_r}H_{\mathbf{d}^{n+1-r}_d}(\tilde{\mathcal{M}}_d(\ell)) & \cong & I^{\mathbf{q}_r}H_{\mathbf{d}^{n+1-r}_d}(\overline{\mathcal{M}}_d(\ell))
\end{eqnarray}
for $r>1$, since allowable $k$-chains in $\tilde{\mathcal{M}}_d(\ell)$ with $k\leq \mathbf{d}^{n+1-r}_d+1$ cannot intersect the extra stratum $\ast$. For $r=1$ this does not work, as the generator of $I^{\mathbf{q}_1}H_{\mathbf{d}^n_d}(\overline{\mathcal{M}}_d(\ell))$ bounds in $\tilde{\mathcal{M}}_d(\ell)$ in an allowable way. We can solve this by resorting to non-Goresky-MacPherson perversities, setting
\begin{eqnarray*}
 \mathbf{q}_1(\ast)&=&0.
\end{eqnarray*}
Now (\ref{eqn_thom}) also holds for $r=1$, and we can form the intersection ring for $\tilde{\mathcal{M}}_d(\ell)$ given by
\begin{eqnarray*}
 I\!H^\ast(\tilde{\mathcal{M}}_d(\ell))&=&\bigoplus_{r=0}^{n-d}I^{\mathbf{q}_r}H_{\mathbf{d}^{n+1-r}_d}(\tilde{\mathcal{M}}_d(\ell))
\end{eqnarray*}
with the intersection product coming from \cite[Thm.5.3]{friedg}. Note that products involving $r_1+r_2>n-d$ are considered $0$.

Define an element
\begin{eqnarray*}
 R&\in & I^{\mathbf{p}_1}H_{\mathbf{d}^{n-1}_d}(\mathcal{M}_d(\ell))
\end{eqnarray*}
as follows. The fundamental class $[\mathcal{M}_d(\ell)] \in I^{\mathbf{p}_0}H_{\mathbf{d}^n_d}(\mathcal{M}_d(\ell))$ represents a generator $X\in I^{\mathbf{q}_1}H_{\mathbf{d}^n_d}(\tilde{\mathcal{M}}_d(\ell))$ which we can intersect with itself to obtain an element $X^2\in I^{\mathbf{q}_2}H_{\mathbf{d}^n_d}(\tilde{\mathcal{M}}_d(\ell))$. We then let $R=p_\ast(X^2)$.

We want to represent $R$ more directly. For this, define
\[
 i_j\colon \mathcal{M}_d(\ell) \to \overline{\mathcal{M}}_d(\ell)
\]
for each $j\in \{1,\ldots,n-1\}$ by 
\begin{eqnarray*}
 i_j([x_1,\ldots,x_{n-1}])&=&[x_1,\ldots,x_{n-1},\pi(x_j)]
\end{eqnarray*}
where $\pi\colon S^{d-1}\to \R^{d-1}$ is projection to the last $d-1$ coordinates of $S^{d-1}\subset \R^d$. Then
\[
 i_j(\mathcal{N}_{d-k}(\ell)-\mathcal{N}_{d-k-1}(\ell))\subset \overline{\mathcal{N}}_{d-k}^{d-k-1}(\ell) -\overline{\mathcal{N}}_{d-k-1}(\ell)
\]
and it is clear that $(i_j)_\ast=i_\ast\colon I^{\mathbf{p}_r}H_\ast(\mathcal{M}_d(\ell)) \to I^{\mathbf{q}_{r+1}}H_\ast(\overline{\mathcal{M}}_d(\ell))$.

Let $\ell=(\ell_1,\ldots,\ell_n)$ be a generic, ordered length vector. For $j=1,\ldots,n-1$ recall the length vector $\ell^j_+\in \R^{n-1}$ given by
\begin{eqnarray*}
 \ell^j_+&=&(\ell_1,\ldots,\hat{\ell}_j,\ldots,\ell_{n-1},\ell_n+\ell_j).
\end{eqnarray*}
If we replace the last coordinate by $\ell_n-\ell_j$, we get a length vector that we call $\ell^j_-\in \R^{n-1}$.

As in \cite{schint}, we now get elements
\[
 [\mathcal{M}_d(\ell^1_+)],\ldots,[\mathcal{M}_d(\ell^{n-1}_+)],[\mathcal{M}_d(\ell^1_-)],\ldots, [\mathcal{M}_d(\ell^{n-1}_-)] \,\,\in\,\, I^{\mathbf{p}_1}H_{\mathbf{d}^{n-1}_d}(\mathcal{M}_d(\ell)).
\]
For the moment, there is still some abiguity about the orientations of these elements. To resolve this, note that we can think of $\mathcal{M}_d(\ell^j_\pm)$ as a subset of $\overline{\mathcal{M}}_d(\ell)$ using the the standard zero section. Then
\begin{eqnarray*}
 \mathcal{M}_d(\ell)\cap i_j(\mathcal{M}_d(\ell))&=& \{[x_1,\ldots,x_{n-1},0]\,|\,x_j\in S^0\}\\
&=& \mathcal{M}_d(\ell^j_+) \sqcup \mathcal{M}_d(\ell^j_-)
\end{eqnarray*}
Choosing an orientation of $\mathcal{M}_d(\ell)$ then induces an orientation on $\mathcal{M}_d(\ell)\cap i_j(\mathcal{M}_d(\ell))$ for all $j=1,\ldots,n-1$. In particular, we get
\begin{eqnarray}\label{formofR}
 R&=&[\mathcal{M}_d(\ell^j_+)]+[\mathcal{M}_d(\ell^j_-)]
\end{eqnarray}
for all $j=1,\ldots,n-1$. As $R=p_\ast(X^2)$ we see that $2 ([\mathcal{M}_d(\ell^j_+)] + [\mathcal{M}_d(\ell^j_-)])=0$ for even $d$ which differs slightly from \cite[Lm.7.5]{schint} because of different orientation conventions.

To simplify notation, let us write
\begin{eqnarray*}
 X_j&=&[\mathcal{M}_d(\ell^j_+)]\,\,\,=\,\,\,[\mathcal{M}_d(\ell_{\{j\}})]\,\,\,\in \,\,\, I^{\mathbf{p}_1}H_{\mathbf{d}^{n-1}_d}(\mathcal{M}_d(\ell))\\
 X_j^-&=&[\mathcal{M}_d(\ell^j_-)] \,\,\,\in \,\,\, I^{\mathbf{p}_1}H_{\mathbf{d}^{n-1}_d}(\mathcal{M}_d(\ell)).
\end{eqnarray*}

\begin{remark}
 \label{rem_stratfibbun}
The space $\overline{\mathcal{M}}_d(\ell)$ is a stratified bundle in the sense of \cite{baufer}. To see this, let $M$ be a compact smooth $G$-manifold, where $G$ is a compact Lie group. If $\mathfrak{F}$ denotes the orbit category of $G$, that is, the category with objects $G/H$ for $H$ a closed subgroup of $G$, and whose morphisms are $G$-equivariant maps $G/H\to G/H'$, then $M \to M/G$ is an $\mathfrak{F}$-stratified bundle by \cite[Example 4.6]{baufer}, see also \cite{mdavis}. Now let $V$ be a vector space and $\rho\colon G \to GL(V)$ a representation. Define the category $\mathfrak{V}$ as a subcategory of topological spaces where the objects are the quotient spaces $V/H$ for $H$ a closed subgroup of $G$. There is an obvious functor $\varphi_V\colon \mathfrak{F} \to \mathfrak{V}$, and one can check that $M\times_GV$ agrees with the coend construction $M^\circ \otimes_\mathfrak{F}\varphi_V$ described in \cite[\S 6]{baufer}. In particular, $p\colon M\times_G V\to M/G$ is a $\mathfrak{V}$-stratified fibre 
bundle in the sense of \cite{baufer}. Notice however that it is not a stratified vector bundle in general.

We can now think of the element $R$ above as an \em Euler class\em, in that it represents an obstruction for the existence of a stratified non-zero section $\sigma\colon \mathcal{M}_d(\ell) \to \overline{\mathcal{M}}_d(\ell)$. One would expect that the above constructions can extend to $M\times_G V\to M/G$, and one may ask how far this can be generalized to the setting of stratified fibre bundles.
\end{remark}

\section{The intersection ring of $\mathcal{M}_d(\ell)$}

For the next lemma, we also use the notation
\begin{eqnarray*}
 X_K&=&\prod_{i\in K} X_i
\end{eqnarray*}
for $K\subset \{1,\ldots,n-1\}$.

\begin{lemma}\label{lem_duals}
 Let $d\geq 4$ and $\ell$ a generic, ordered $d$-regular length vector, and $J\subset\{1,\ldots,n-1\}$ such that $J\cup \{n\}$ is $\ell$-short, and $K\subset \{1,\ldots,n-1\}$ with $|K|=|J|$. Then there exists $Y_J\in I^\mathbf{0}H_{|J|(d-1)}(\mathcal{M}_d(\ell))$ with
\begin{eqnarray*}
 X_K \cdot Y_J &=&\left\{ \begin{array}{cl} 1 & K=J \\ 0 & \mbox{\rm else} \end{array}\right.
\end{eqnarray*}
and
\begin{eqnarray*}
 R \cdot Y_J &=& 0.
\end{eqnarray*}

\end{lemma}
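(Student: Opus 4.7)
The plan is to construct $Y_J$ as the image of a map $\sigma\colon (S^{d-1})^J\to \mathcal{M}_d(\ell)$ that freely rotates the links indexed by $J$, adjusting the remaining links continuously to preserve the closure condition. Writing $K=\{1,\ldots,n-1\}\setminus J$, the $\ell$-shortness of $J\cup\{n\}$ yields the strict triangle inequality
\begin{eqnarray*}
 \left|-\ell_n e_1-\sum_{j\in J}\ell_j u_j\right| & \leq & \ell_n+\sum_{j\in J}\ell_j \,\,<\,\,\sum_{i\in K}\ell_i
\end{eqnarray*}
for every $(u_j)_{j\in J}\in (S^{d-1})^J$. Combined with $|K|\geq d$ (which follows from $d$-regularity, as it forces $|J|\leq n-d-1$), the fibre of the closure map over each $(u_j)$ is non-empty and positive-dimensional. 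I will arrange $\sigma$ so that for every $(u_j)$ the representing configuration $(y_1,\ldots,y_{n-1})$ satisfies (a) $y_i\notin\{e_1,-e_1\}$ for all $i\in K$, and (b) the configuration has rank $d-1$. Setting $Y_J=\sigma_\ast[(S^{d-1})^J]$, property (b) places it in the regular stratum so that $Y_J\in I^\mathbf{0}H_{|J|(d-1)}(\mathcal{M}_d(\ell))$.

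The pairings with the $X_K$ then follow by transversality. For $K=J$ the preimage $\sigma^{-1}(X_J)$ consists of the single point $(e_1,\ldots,e_1)$ where the intersection is transverse, and a local computation with the orientation convention (\ref{formofR}) gives $X_J\cdot Y_J=1$. For $K\neq J$ with $|K|=|J|$, any $k\in K\setminus J$ satisfies $y_k\neq e_1$ throughout the image of $\sigma$ by (a), so $X_k$ and $Y_J$ are disjoint as cycles and $X_K\cdot Y_J=0$.

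Vanishing of $R\cdot Y_J$ follows immediately from (a): since $|J|\leq n-d-1<n-1$ there exists $j\in\{1,\ldots,n-1\}\setminus J$, and $y_j\notin\{e_1,-e_1\}$ on $Y_J$, so using $R=X_j+X_j^-$ from (\ref{formofR}) both summands have empty intersection with $Y_J$. Equivalently, $\pi(y_j)\neq 0$ on $Y_J$ means that the restriction of $i_j$ to $Y_J$ is a nowhere-zero section of the stratified fibre bundle $p\colon \overline{\mathcal{M}}_d(\ell)\to \mathcal{M}_d(\ell)$ over $Y_J$, and $R$ vanishes as the Euler class of this bundle (cf.\ Remark \ref{rem_stratfibbun}). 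The main obstacle is therefore the construction of $\sigma$ with properties (a) and (b); the slack in the triangle inequality above and the high connectivity of the $K$-chain space for $|K|\geq d$ should permit a global continuous choice, in the spirit of the dual class construction in \cite{schint} for even $d$, with the additional care of keeping the non-$J$ coordinates away from $\pm e_1$.
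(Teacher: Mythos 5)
Your proposal follows the paper's route closely: construct $Y_J$ from an embedding of $(S^{d-1})^{|J|}$ into $\mathcal{M}_d(\ell)$ in the manner of \cite[Lm.8.1]{schint}, arrange that the coordinates indexed by the complement of $J$ avoid $e_1$ to get $X_K\cdot Y_J=0$ for $K\neq J$, and deduce $R\cdot Y_J=0$ because some index $j\notin J$ also avoids $-e_1$, so both terms of $R=X_j+X_j^-$ miss $Y_J$. This is exactly the paper's argument, and your remark that this is the Euler class of the stratified bundle restricted to $Y_J$ matches Remark \ref{rem_stratfibbun}.

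One caveat: your condition (a) asks that \emph{every} $y_i$ with $i\in K$ avoid $\pm e_1$. This is stronger than what the paper establishes, and likely stronger than what the explicit construction delivers. In the \cite{schint} robot-arm construction the non-$J$ links must trace a straight line and then perform rotations to sweep the region reachable by the $J$-arm; during those rotations several non-$J$ coordinates will in general pass through $\pm e_1$. The paper only arranges that the first link (the one anchored at the origin), which can be exempted from the rotations, stays away from $\pm e_1$ along the straight-line trace. Since your argument for $R\cdot Y_J=0$ uses only one such index $j$, the over-claim is harmless, but (a) should be weakened to: $y_i\neq e_1$ for all $i\notin J$, and $y_j\neq\pm e_1$ for one chosen $j\notin J$. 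Similarly, the appeal to ``high connectivity'' to produce a global continuous section over $(S^{d-1})^J$ is vague; what actually produces the section is the concrete straight-line-then-rotate parametrisation of \cite{schint}, and that reference should be leaned on explicitly rather than gestured at.
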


\begin{proof}
In \cite[Lm.8.1]{schint} explicit duals $Y_J$ for $X_J$ were constructed by defining appropriate embeddings of $(S^{d-1})^{|J|}$ into $\mathcal{M}_d(\ell)$. The relation $Y_J\cdot X_K=0$ for $K\not=J$ was a consequence of being able to avoid letting the $k$-th coordinate $x_k$ of the element in $\mathcal{M}_d(\ell)$ point in the same direction as $x_n$.

To do this the robot arm consisting of those links which were not part of $J\cup \{n\}$ had to trace the area in $\R^d$ that could be reached by the robot arm consisting of the links in $J$ and which started at $\ell_n e_1\in \R^d$. To do this, the first robot arm has to trace a straight line, and then reach all other points using appropriate rotations, where not all links would rotate the same way. For the first link (the one connected to the origin), one can avoid completely this latter rotation, so one just has to avoid the points $\pm e_1$ during the trace of the straight line, which can easily be done. Note that even for $d=4$ we have $n\geq 6$ to avoid trivial cases, so that the first robot arm  has at least four links.

Such a dual will then also satisfy $Y_J\cdot X_k^-=0$, and therefore
\begin{eqnarray*}
 R\cdot Y_j &=& (X_k+X_k^-)\cdot Y_j\\
&=& 0,
\end{eqnarray*}
where we use (\ref{formofR}).
\end{proof}

Let $\mathbf{p}_1'$ be the dual perversity to $\mathbf{p}_1$, that is, the perversity with $\mathbf{p}_1'+\mathbf{p}_1=\mathbf{t}$.

\begin{lemma}\label{lem_geoconst}
 Let $d\geq 5$ be odd and $\ell\in \R^n$ a generic, $d$-regular, ordered length vector with $n \geq d+3$. Then there exists an element $Y\in I^{\mathbf{p}_1'}H_{d-1}(\mathcal{M}_d(\ell))$ with
\begin{eqnarray*}
 X_{n-1} \cdot Y &=& 0\\
 X_{n-1}^- \cdot Y &=& 1.
\end{eqnarray*}

\end{lemma}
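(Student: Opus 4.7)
The plan is to construct $Y$ explicitly as a geometric $(d-1)$-cycle in $\mathcal{M}_d(\ell)$ that meets $\mathcal{M}_d(\ell^{n-1}_-) \subset \mathcal{M}_d(\ell)$ transversally at exactly one point and is disjoint from $\mathcal{M}_d(\ell^{n-1}_+) \subset \mathcal{M}_d(\ell)$. In the chain-space model $\mathcal{M}_d(\ell) = \mathcal{C}_d(\ell)/\SO(d-1)$, the subspaces $\mathcal{M}_d(\ell^{n-1}_\pm)$ are precisely the loci $\{x_{n-1} = \pm e_1\}$, so such a $Y$ immediately yields $X_{n-1}^- \cdot Y = 1$ and $X_{n-1} \cdot Y = 0$.

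The key geometric observation is that $x_{n-1} = -e_1$ forces the partial sum $p = \sum_{i=1}^{n-2}\ell_i x_i$ to equal $(\ell_{n-1} - \ell_n)e_1$, whereas $x_{n-1} = e_1$ forces $p = -(\ell_{n-1} + \ell_n)e_1$. These two distinguished points lie on the negative $e_1$-axis but differ substantially in distance from the origin, and this asymmetry, absent in the even-dimensional case treated in \cite{schint}, is the geometric reflection of the extra generator $R$ mentioned in the introduction. I would build $Y$ as a family of configurations in which $p$ sweeps out a small $(d-1)$-sphere around the first distinguished point while staying well away from the second.

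Concretely, I would begin with a full-rank point $[c_0] \in \mathcal{M}_d(\ell^{n-1}_-)$; the normal bundle to $\mathcal{M}_d(\ell^{n-1}_-)$ in $\mathcal{M}_d(\ell)$ has rank $d-1$ at $[c_0]$, and a small transversal $(d-1)$-disc $D$ is obtained by exponentiating in the normal directions, parametrized by perturbations of $x_{n-1}$ away from $-e_1$ together with the compensating perturbations of $x_1,\ldots,x_{n-2}$. By confining $D$ to a tubular neighborhood in which $\|p-(\ell_{n-1}-\ell_n)e_1\|$ is small compared to $2\ell_{n-1}$, one ensures $D \cap \mathcal{M}_d(\ell^{n-1}_+) = \emptyset$. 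The boundary $\partial D \cong S^{d-2}$ is then capped off by a $(d-1)$-chain lying in the complement $\mathcal{M}_d(\ell) \setminus \bigl(\mathcal{M}_d(\ell^{n-1}_-) \cup \mathcal{M}_d(\ell^{n-1}_+)\bigr)$, which is possible provided this complement is sufficiently connected in low dimensions; the hypotheses $d \geq 5$, $n \geq d+3$ together with $d$-regularity of $\ell$ provide enough flexibility to guarantee this.

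The final verification is routine: transversality of the unique intersection with $\mathcal{M}_d(\ell^{n-1}_-)$ follows from a tangent-space count analogous to the one invoked for Lemma \ref{lem_duals} via \cite[Lm.8.1]{schint}, and $Y$ represents a class in $I^{\mathbf{p}_1'}H_{d-1}(\mathcal{M}_d(\ell))$ because by construction its points have rank $d-1$, so $Y$ is disjoint from every singular stratum $\mathcal{N}_k(\ell)-\mathcal{N}_{k-1}(\ell)$ for $k \leq d-2$ and the perversity inequality is trivially satisfied. The main obstacle is the cap-off step: one must show that the boundary $(d-2)$-sphere of the local transversal disc is null-homologous in the complement of the two subspaces $\mathcal{M}_d(\ell^{n-1}_{\pm})$. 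This is precisely where the odd-dimensional character of the problem, together with $d$-regularity, comes in — it is the new ingredient beyond what was needed for the even-dimensional Lemma \ref{lem_duals}, reflecting the independent role of the Euler-class generator $R$ introduced in Section \ref{sec_bundles}.
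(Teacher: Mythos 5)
Your outline captures the right starting point (a normal disc to $\mathcal{M}_d(\ell^{n-1}_-)$, the observation that the two loci $x_{n-1}=\pm e_1$ force the partial sum $\sum_{i<n-1}\ell_i x_i$ to different points on the $e_1$-axis), but the proposal has a genuine gap precisely at the step you yourself flag as ``the main obstacle.''

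First, the cap-off in the complement of $\mathcal{M}_d(\ell^{n-1}_+)\cup\mathcal{M}_d(\ell^{n-1}_-)$ \emph{and} of all singular strata is asserted, not proved, on the grounds that the complement is ``sufficiently connected.'' That is exactly the hard content, and in fact it fails as stated: the paper's Remark following Lemma~\ref{lem_geoconst} explains that for $d=5$ one can arrange the cap to stay in the rank-$\geq d-1$ part by using quaternion multiplication ($S^3\subset\SO(4)$) and the $3$-connectivity of $\mathcal{C}_5(\ell^{n-1}_-)$, and that $d=9$ admits an analogous octonionic trick, but that ``it is not clear how this construction could generalize to the other cases of odd~$d$.'' Your argument is essentially the $d=5$ argument stated in general; it does not work for $d=7,11,13,\ldots$

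Second, and consequently, the claim that ``by construction its points have rank $d-1$, so $Y$ is disjoint from every singular stratum and the perversity inequality is trivially satisfied'' is exactly what the paper cannot and does not achieve for general odd~$d$. The actual construction in Section~\ref{sec_geoconst} produces an $(S^1)^k$-equivariant map $f\colon S^{d-2}\to\mathcal{C}_d(\ell^{n-1}_-)$ whose rank drops as low as $3$ on the coordinate subspheres (Lemma~\ref{lem_ranks}); the induced map $F\colon\CP^k\to\mathcal{M}_d(\ell)$ (with $d-1=2k$) is a genuinely \emph{stratified} map hitting $\mathcal{N}_{2i+3}(\ell^{n-1}_-)-\mathcal{N}_{2i+2}(\ell^{n-1}_-)$ for each $i\leq k-2$. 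That is why $Y=F_\ast[\CP^k]$ lands only in $I^{\mathbf{p}_1'}H_{d-1}$ and why Lemma~\ref{lem_strata} (matching the stratum dimensions against the perversity $\mathbf{p}_1'$) is needed; the perversity is not a formality but is doing real work. If your cap-off avoiding all singular strata were available, the class would lie in $I^{\mathbf{0}}H_{d-1}$, a strictly stronger conclusion that the paper deliberately does not claim outside $d=5,9$. So the proposal, while geometrically well-motivated, leaves the essential construction unproved and would, if literally filled in, require solving a problem the paper circumvents by allowing $Y$ to meet singular strata.
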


\begin{remark}
 The proof of Lemma \ref{lem_geoconst} relies on a delicate geometric construction that we postpone to Section \ref{sec_geoconst}. In the case $d=5$ this construction simplifies significantly, and we give this simplified construction here as it already contains some of the ideas required in the general case. We will assume that $\ell$ is ordered. By the equivariant Morse-Bott function on $\mathcal{C}_5(\ell)$ constructed in \cite{schuet} we get an $\SO(4)$-equivariant embedding $\mathcal{C}_5(\ell^{n-1}_-)\times D^{4}$ into $\mathcal{C}_5(\ell)$, where $\SO(4)$ acts diagonally on $\mathcal{C}_5(\ell^{n-1}_-)\times D^4$, and $\mathcal{C}_5(\ell^{n-1}_-)\times \{0\}$ corresponds to the obvious embedding $\mathcal{C}_5(\ell^{n-1}_-)\subset \mathcal{C}_5(\ell)$.

Pick an element $p\in \mathcal{C}_5(\ell^{n-1}_-)$ of rank $5$ (or $4$). Now define $f\colon S^3\to \mathcal{C}_5(\ell^{n-1}_-)$ by $f(q)=q\cdot p$ where we think of $S^3$ as a subgroup of $\SO(4)$ via quaternion multiplication. Now observe that $\mathcal{C}_5(\ell^{n-1}_-)$ is $3$-connected. Firstly, the Morse-Bott function in \cite[\S 3]{schuet} can be modified to a Morse function which has critical points only of index $4(n-3-k)$ or $4(n-3-k)+3$ for $k\in \{0,\ldots,n-3\}$, which makes $\mathcal{C}_5(\ell^{n-1}_-)$ simply connected. Furthermore, the cohomology calculation in \cite[Thm.2.1]{fahasc} shows that the first non-trivial homology group of $\mathcal{C}_5(\ell^{n-1}_-)$ has at least degree $4$, which means this space is $3$-connected. Note that we require $\ell^{n-1}_-$ to be $3$-regular, which is implied by $\ell$ being $5$-regular, to ensure the vanishing of the third homology group.

We can therefore extend $f$ to a map $F\colon D^4 \to \mathcal{C}_5(\ell^{n-1}_-)$ which can even be an embedding. Also, this embedding can be made transverse to the map $g\colon \mathcal{C}_3(\ell_-^{n-1})\times \SO(4) \to \mathcal{C}_5(\ell_-^{n-1})$ given by $g(x,A)=Ax$. For dimension reasons, this means that $F$ misses $g$, so that all $F(x)$ have rank at least $4$. Finally, the map $\tilde{F}\colon D^4 \to \mathcal{C}_5(\ell^{n-1}_-) \times D^4$ given by $\tilde{F}(x)=(F(x),x)$ induces a map $\bar{F}\colon S^4 \to \mathcal{M}_5(\ell)$ with $\bar{F}(S^4) \cap \mathcal{M}_5(\ell^{n-1}_-)=\{[p]\}$. By letting $F$ be constant in a neighborhood of $0$ (which lets $F$ no longer be an embedding, but lets $\tilde{F}$ remain an embedding) this intersection is transverse. Furthermore, $\bar{F}(S^4) \cap \mathcal{M}_5(\ell^{n-1}_-)=\emptyset$. Therefore $\bar{F}(S^4)$ represents the required element $Y$, and since all points in $\bar{F}(S^4)$ have rank at least $4$, we even get an element $Y\in I^\mathbf{0}H_4(\mathcal{M}_d(\ell))$. Also, note that we only require $n\geq d+2=7$ here, if $n=6$, the element $X_{n-1}^-=0$.

A similar construction can be done in the case $d=9$ using octonian multiplication, however, it is not clear how this construction could generalize to the other cases of odd $d$.
\end{remark}

\begin{proposition}
\label{prop_relations}
 Let $d\geq 5$ be odd and $\ell\in \R^n$ be a $d$-regular, ordered, generic length vector with $n\geq d+2$. Let $k=a_1(\ell)$. Then the intersection ring $I\!H^\ast(\mathcal{M}_d(\ell))$ is generated by elements $R, X_1, \ldots, X_k$ which satisfy the following relations:
\begin{enumerate}
 \item $RX_i = X_i^2$ for all $i=1,\ldots,k$.
 \item $X_{i_1}\cdots X_{i_m}$ if $\{i_1,\ldots,i_m,n\}$ is $\ell$-long.
\end{enumerate}
For $n\geq d+3$ we can choose $R$ to be the Euler class of $\ell$.
\end{proposition}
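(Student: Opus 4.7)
The plan is to verify the two relations directly from the geometry of the $X_i$ and $R$, and then establish generation by induction on the degree $r$ in the ring $IH^\ast(\mathcal{M}_d(\ell))=\bigoplus_{r}I^{\mathbf{p}_r}H_{\mathbf{d}^{n-r}_d}(\mathcal{M}_d(\ell))$, pinning down each graded piece using the duals from Lemmas \ref{lem_duals} and \ref{lem_geoconst}.

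For relation (2), the $d$-regularity of $\ell$ implies that if $\{i_1,\ldots,i_m,n\}$ is $\ell$-long then the contracted length vector $\ell_{\{i_1,\ldots,i_m\}}$ has last entry $\ell_n+\ell_{i_1}+\cdots+\ell_{i_m}$ exceeding the sum of the remaining entries, so $\mathcal{M}_d(\ell_{\{i_1,\ldots,i_m\}})=\emptyset$; iterating the identification $X_i=[\mathcal{M}_d(\ell_{\{i\}})]$ through transverse intersections in $\overline{\mathcal{M}}_d(\ell)$ identifies $X_{i_1}\cdots X_{i_m}$ with the fundamental class of this empty space. Relation (1) follows from (\ref{formofR}): $RX_i-X_i^2=X_iX_i^-$, and since $\mathcal{M}_d(\ell^i_+)$ and $\mathcal{M}_d(\ell^i_-)$ sit disjointly inside $\mathcal{M}_d(\ell)$ (distinguished by $x_i=x_n$ versus $x_i=-x_n$), the intersection product $X_iX_i^-$ vanishes after a small stratified perturbation.

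Generation in degree $0$ is trivial. For degree $1$, Proposition \ref{odd_generators} gives rank $1+k$, the $X_i$ are linearly independent by Lemma \ref{lem_duals}, and $R$ lies outside their span: pairing a hypothetical relation $R=\sum c_iX_i$ with $Y_{\{j\}}$ yields $c_j=0$ via $X_i\cdot Y_{\{j\}}=\delta_{ij}$ and $R\cdot Y_{\{j\}}=0$ from Lemma \ref{lem_duals}, which forces $R=0$ and contradicts $R\cdot Y=X_{n-1}^-\cdot Y=1$ from Lemma \ref{lem_geoconst}. So $\{R,X_1,\ldots,X_k\}$ is a basis in this degree.

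For the inductive step at degree $r\geq 2$, given $\alpha\in I^{\mathbf{p}_r}H_{\mathbf{d}^{n-r}_d}$, set $\beta=\alpha-\sum_{J}(\alpha\cdot Y_J)X_J$ summed over $|J|=r$ with $J\cup\{n\}$ $\ell$-short; then $\beta$ pairs trivially with every such $Y_J$. The crux of the argument, and the main obstacle, is to establish that $\beta\in R\cdot I^{\mathbf{p}_{r-1}}H_{\mathbf{d}^{n-r+1}_d}$, for then $\beta=R\gamma$ with $\gamma$ a polynomial in $R,X_1,\ldots,X_k$ by induction, and so $\alpha$ is too. That $R$-multiples lie in the kernel of the $Y_J$ pairings is immediate from Lemma \ref{lem_duals}. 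The reverse inclusion requires (i) a rank computation generalizing Proposition \ref{odd_generators} to show $I^{\mathbf{p}_r}H_{\mathbf{d}^{n-r}_d}$ has rank $a_r(\ell)+a_{r-1}(\ell)$, and (ii) injectivity of multiplication by $R$ on the span of the $X_J$'s for $|J|=r-1$. Both should be attackable by transferring to $IH^\ast(\tilde{\mathcal{M}}_d(\ell))$ via $i_\ast$, where $R$ corresponds to self-intersection with the fundamental class $X$ and the stratified-bundle structure from Remark \ref{rem_stratfibbun} should permit a K\"unneth-style decomposition; the final claim that $R$ may be identified with the Euler class of the stratified bundle when $n\geq d+3$ is precisely the non-triviality witness supplied by Lemma \ref{lem_geoconst}.
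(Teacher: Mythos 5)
Your verification of the relations and of the degree-one basis is correct and matches the paper's proof: relation (2) follows because $\{i_1,\ldots,i_m,n\}$ $\ell$-long means exactly that $\mathcal{M}_d(\ell_{\{i_1,\ldots,i_m\}})=\emptyset$ (note this is just the definition of $\ell$-long, not an appeal to $d$-regularity as you write); relation (1) follows from $R=X_i+X_i^-$ of (\ref{formofR}) together with $\mathcal{M}_d(\ell^i_+)\cap\mathcal{M}_d(\ell^i_-)=\emptyset$; and the fact that $R,X_1,\ldots,X_k$ give a basis of $I^{\mathbf{p}_1}H_{\mathbf{d}^{n-1}_d}(\mathcal{M}_d(\ell))$ follows from Proposition \ref{odd_generators} combined with the duals from Lemmas \ref{lem_duals} and \ref{lem_geoconst}, exactly as the paper argues.

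Where you diverge is in attempting to prove generation of the full graded ring by induction on $r$. You correctly isolate what that would require — a rank formula for $I^{\mathbf{p}_r}H_{\mathbf{d}^{n-r}_d}(\mathcal{M}_d(\ell))$ and injectivity of multiplication by $R$ — but you do not establish either, and you say so. This is an honest incomplete attempt rather than an error. Worth noting: the paper's own proof of this proposition does not carry out the higher-degree generation step either. It only exhibits the degree-one basis and verifies the two relations; the ring-theoretic generation claim is imported (implicitly) from the rank analysis in \cite{schint}, where the analogous structure was established in the even case and the same Morse-theoretic computation of intersection homology groups pins down the rank of each graded piece. So your instinct that this is the crux is right, but the intended route to close it is not a K\"unneth decomposition of the stratified bundle (Remark \ref{rem_stratfibbun} is offered as interpretation, not as a computational tool) — it is the degree-by-degree rank bookkeeping already done in \cite{schint}, of the kind used in the proof of Lemma \ref{lem_direct_sum}. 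If you want to complete your inductive step along your own lines, step (i) is the piece you must extract or re-derive from that source; step (ii) then follows from Lemma \ref{lem_geoconst} and Corollary \ref{cor_non-zero}.
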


This is not a complete list of relations, for example we have $R^m=0$ for $m$ large enough simply by the construction of the intersection ring. Notice also that for $n=d+2$ we cannot have non-trivial products for degree reasons.

\begin{proof}
 Let $n=d+2$. Then we just choose the elements $R, X_1, \ldots, X_k$ so that they form a basis of $I^{\mathbf{p}_1}H_{\mathbf{d}^{n-1}_d}(\mathcal{M}_d(\ell))$, compare Proposition \ref{odd_generators}. Any products among these elements are zero for degree reasons, so the relations are trivially satisfied.

Now let $n>d+2$. We now choose $R$ and $X_i$ as in Section \ref{sec_bundles}. By Lemma \ref{lem_duals} and Lemma \ref{lem_geoconst} these elements are linearly independent and form a basis because of Proposition \ref{odd_generators}. By Section \ref{sec_bundles}(2) we get $R=X_j+X^-_j$ for all $j=1,\ldots,n-1$, so $R X_i=X_iX_i+X_i^-X_i$. Now $X_i^-X_i$ is represented by $\mathcal{M}_d(\ell^i_-) \cap \mathcal{M}_d(\ell^i_+)=\emptyset$, so (1) follows. Also,  $X_{i_1}\cdots X_{i_m}$ is represented by $\mathcal{M}_d(\ell^{i_1}_+) \cap \cdots \cap \mathcal{M}_d(\ell_+^{i_m})$ which is empty by the condition that $\{i_1,\ldots,i_m,n\}$ is $\ell$-long. Therefore (2) holds.
\end{proof}

\begin{remark}
 We want to compare the previous result to the cohomology ring of $\mathcal{M}_3(\ell)$ determined in \cite{hauknu}. For a generic, ordered length vector $\ell\in \R^n$ their Theorem 6.4 states that
\begin{eqnarray*}
 H^\ast(\mathcal{M}_3(\ell)) &\cong & \Z[R,V_1,\ldots, V_{m-1}]/I_\ell
\end{eqnarray*}
where $R$ and $V_i$ are of degree 2 and $I_\ell$ is the ideal generated by the three families
\begin{eqnarray*}
 V_i^2+RV_i & & i=1,\ldots,n-1\\
 \prod_{i\in L} V_i & & L\subset \{1,\ldots,n-1\} \mbox{ with }L\cup\{n\}\, \ell\mbox{-long}\\
 \sum_{S\subset L,S\in \mathcal{S}_\ast(\ell)} \left( \prod_{i\in S}V_i\right)R^{|L-S|-1}& & L\subset \{1,\ldots,n-1\} \, \ell\mbox{-long}
\end{eqnarray*}
The first two families correspond to the relations in Proposition \ref{prop_relations} after a change of sign. The third family is more complicated, and we will not try to find the corresponding relations for the intersection ring. We note however the following: If $L\subset \{1,\ldots,n-1\}$ is $\ell$-long and $S\subset L$ is $\ell$-short, then either $|L-S|>1$ or $S\cup \{n\}$ is $\ell$-long. Therefore the relations in the third family are of the form $RW$ with $W\in \Z[R,V_1,\ldots, V_{m-1}]$. This was already observed in \cite[Lm.5]{fahasw}.

In the case $d=3$ the stratified bundle $p\colon \overline{\mathcal{M}}_3(\ell)\to \mathcal{M}_3(\ell)$ can be viewed as a complex line bundle, and it is shown in \cite[Prop.7.3]{hauknu} that the negative Chern class agrees with $R$. Note that our $R$ would correspond to the positive Chern (Euler) class, which is consistent with the change of sign in the first relation of the next lemma below.
\end{remark}

\begin{lemma}
\label{lem_relations}
 Let $d\geq 5$ be odd and $\ell\in \R^n$ be a $d$-regular, ordered, generic length vector with $n\geq d+2$ and let $k=a_1(\ell)$. Let $I_\ell$ be the kernel of the surjection $\Phi\colon \Z[R,X_1,\ldots,X_k] \to I\!H^\ast(\mathcal{M}_d(\ell))$ induced by Proposition \ref{prop_relations}. Then there exist elements $W_1,\ldots,W_l\in\Z[R,X_1,\ldots,X_k]$ for some $l\geq 1$ so that $I_\ell$ is generated by relations
\begin{enumerate}
 \item $RX_i - X_i^2$ for all $i=1,\ldots,k$.
 \item $X_{i_1}\cdots X_{i_m}$ if $\{i_1,\ldots,i_m,n\}$ is $\ell$-long.
 \item $RW_i$ for $i=1,\ldots,l$.
\end{enumerate}
\end{lemma}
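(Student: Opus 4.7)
The plan is to first peel off the relations (1) and (2), reducing the lemma to a statement about $R$-multiples, and then to invoke Noetherianity. Let $J_0 \subset \Z[R, X_1, \ldots, X_k]$ be the homogeneous ideal generated by the families (1) and (2), with $R$ and each $X_i$ assigned degree one; by Proposition \ref{prop_relations} we have $J_0 \subseteq I_\ell$. Using (1) to rewrite $X_i^2$ as $RX_i$ and (2) to kill monomials whose complement is $\ell$-long, the quotient $B = \Z[R, X_1, \ldots, X_k]/J_0$ becomes a free $\Z$-module with basis $\{R^a X_K \mid a \geq 0,\ K \cup \{n\}\mbox{ is }\ell\mbox{-short}\}$. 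Every $b \in B$ thus decomposes uniquely as $b = \sum_K c_K X_K + R b'$ with $c_K \in \Z$ and $b' \in B$. The lemma will then follow from two assertions: (a) the induced map $\bar\Phi \colon B \to I\!H^\ast(\mathcal{M}_d(\ell))$ has kernel contained in $RB$, and (b) the ideal quotient $(I_\ell : R) = \{W \mid RW \in I_\ell\}$ is finitely generated, the latter being immediate from Noetherianity of the polynomial ring.

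The substantive step is (a), and this is where I expect the main obstacle. Since $R$ and each $X_i$ lie in the $\mathbf{p}_1$-summand, $\bar\Phi$ is graded for the degree convention above, so I would work one degree at a time. For $b \in \ker\bar\Phi$ with degree-$m$ component $b_m$, the identity $\bar\Phi(b_m) = 0$ reads
\begin{eqnarray*}
\sum_{|K| = m} c_K X_K + R \cdot \bar\Phi(b'_{m-1}) &=& 0
\end{eqnarray*}
in $I^{\mathbf{p}_m}H_{\mathbf{d}^{n-m}_d}(\mathcal{M}_d(\ell))$. For each admissible $K_0$ (meaning $|K_0|=m$ and $K_0 \cup \{n\}$ $\ell$-short), Lemma \ref{lem_duals} supplies $Y_{K_0}$ satisfying $X_K \cdot Y_{K_0} = \delta_{K, K_0}$ for $|K|=m$ together with the crucial identity $R \cdot Y_{K_0} = 0$. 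Taking the intersection product of the displayed equation with $Y_{K_0}$ collapses the first sum to $c_{K_0}$ and, using graded commutativity, annihilates the $R$-multiple term, forcing $c_{K_0} = 0$. Running over all admissible $K_0$ then forces $b \in RB$.

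To wrap up, I would pick generators $W_1, \ldots, W_l$ of the finitely generated ideal $(I_\ell : R)$. Given $P \in I_\ell$, step (a) applied to the image of $P$ in $B$ yields $W \in \Z[R, X_1, \ldots, X_k]$ with $P \equiv RW \pmod{J_0}$; since $J_0 \subseteq I_\ell$, this forces $RW \in I_\ell$, hence $W \in (I_\ell : R)$. Expanding $W$ in the $W_i$ then writes $P$ as a polynomial combination of the three listed families. The whole argument rests on the identity $R \cdot Y_J = 0$ of Lemma \ref{lem_duals}; geometrically this says that the dual cycles built from the complementary robot arm can be chosen disjoint from the zero section $i_j(\mathcal{M}_d(\ell))$, and it is precisely this input that constrains every relation outside $J_0$ to be a multiple of the Euler class $R$.
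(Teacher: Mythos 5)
Your proof is correct and rests on exactly the same key input as the paper's: the Hilbert Basis Theorem for finiteness, the inclusion of the ideal $J_0$ generated by the families (1) and (2) into $I_\ell$, and above all the duals $Y_J$ of Lemma \ref{lem_duals} with $R\cdot Y_J = 0$, which force any relation surviving reduction modulo $J_0$ to be a multiple of $R$. The organization differs a little. The paper adjoins (1) and (2) to a finite generating set of $I_\ell$ and then reduces each extra generator $V_i$ monomial by monomial: squares $X_v^u$ are traded for $R^{u-1}X_v$, $\ell$-long monomials are deleted, and an $R$-free squarefree monomial $X_J$ is ruled out by pairing with $Y_J$, so that after modification $V_i = RW_i$. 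You instead pass to the quotient $B = \Z[R,X_1,\ldots,X_k]/J_0$, prove $\ker\bar\Phi\subseteq RB$ degree by degree with the same pairing, and then invoke finite generation of the ideal quotient $(I_\ell : R)$ to manufacture the $W_i$. Your version makes explicit a point the paper leaves implicit, namely that the pairing argument must be applied to homogeneous generators (which is legitimate since $I_\ell$ is a graded ideal). One small overclaim: you assert that $B$ is a \emph{free} $\Z$-module on $\{R^a X_K : a\geq 0,\ K\cup\{n\}\ \ell\text{-short}\}$; this is in fact true (the given generators of $J_0$ form a Gr\"obner basis for a suitable monomial order), but your argument only uses that this set \emph{spans} $B$, since the vanishing of the coefficients $c_K$ is rederived by pairing with the $Y_{K_0}$ rather than from uniqueness of the decomposition. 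So the freeness is unproved and also unneeded.
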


\begin{proof}
By the Hilbert Basis Theorem we know that $I_\ell$ is finitely generated. Since the elements of the form (1) and (2) are in $I_\ell$ by Proposition \ref{prop_relations}, we can simply add them to any finite generating set. So these elements together with finitely many elements $V_1,\ldots, V_l\in \Z[R,X_1,\ldots,X_k]$ form a generating set. Let us write each $V_i$ as a linear combination of monomials $V_i=\sum_{j=1}^{j_i} a_{ij}V_{ij}$ with $a_{ij}\in \Z-\{0\}$. We can first assume that no monomial $V_{ij}$ contains more than one factor of any $X_v$, for we could replace this monomial with the corresponding monomial having $R^{u-1}X_v$ in place of $X_v^u$ using a relation from (1). Now if a monomial $V_{ij}$ has no factor $R$, we can write it as $V_{ij}=X_{u_1}\cdots X_{u_v}$ with $J=\{u_1,\ldots,u_v\}$ and $|J|=v$. If $J\cup \{n\}$ is $\ell$-long, we can remove $V_{ij}$ using a relation of the form (2). If $J\cup \{n\}$ is $\ell$-short we get a Poincar\'e dual $Y_J$ to $V_{ij}$ from Lemma \ref{lem_duals} 
with $R\cdot Y_J=0$. Then $V_i\cdot Y_J=a_{ij}\not=0$ which contradicts $V_i\in I_\ell$. Therefore such a monomial cannot appear in $V_i$. It follows that $V_i=RW_i$ for some $W_i\in \Z[R,X_1,\ldots,X_k]$.
\end{proof}

Let us now consider $\Z/2$ coefficients. To simplify our discussion, we will simply tensor the integral intersection ring with $\Z/2$ to obtain a new ring that we denote by
\begin{eqnarray*}
 I\!H_{\Z/2}^\ast(\mathcal{M}_d(\ell))&=& I\!H^\ast(\mathcal{M}_d(\ell))\otimes \Z/2.
\end{eqnarray*}

\begin{corollary}
\label{cor_thefacering}
 Let $d\geq 5$ be odd and $\ell\in \R^n$ be a $d$-regular, generic length vector with $d\geq d+3$. Let $R$ be the Euler class of $\ell$ with $\Z/2$ coefficients. Then
\begin{eqnarray*}
 I\!H^\ast_{\Z/2}(\mathcal{M}_d(\ell))/\langle R\rangle &\cong & \Lambda_{\Z/2}[\mathcal{S}_\ast(\ell)].
\end{eqnarray*}

\end{corollary}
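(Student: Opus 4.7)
The plan is to deduce the corollary directly from Lemma \ref{lem_relations} by three steps: (i) tensor the integral presentation with $\Z/2$, (ii) quotient by the class $R$, and (iii) identify the result with the exterior face ring.

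By Lemma \ref{lem_relations}, $I\!H^\ast(\mathcal{M}_d(\ell))\cong\Z[R,X_1,\ldots,X_k]/I_\ell$ with $I_\ell$ generated by the three families $RX_i-X_i^2$, the monomials $X_{i_1}\cdots X_{i_m}$ for which $\{i_1,\ldots,i_m,n\}$ is $\ell$-long, and the products $RW_1,\ldots,RW_l$. Since tensoring with $\Z/2$ is right exact, $I\!H^\ast_{\Z/2}(\mathcal{M}_d(\ell))$ inherits the same presentation with the relations reduced mod $2$.

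Next I would form the quotient by the two-sided ideal $\langle R\rangle$. Every $RW_j$ lies in $\langle R\rangle$ and is absorbed, the relation $RX_i-X_i^2$ collapses to $X_i^2=0$, and the monomial relations of type (2) survive unchanged, giving
\[
\Z/2[X_1,\ldots,X_k]\Big/\bigl\langle X_i^2,\ X_{i_1}\cdots X_{i_m} : \{i_1,\ldots,i_m,n\}\text{ is }\ell\text{-long}\bigr\rangle.
\]
To match this with $\Lambda_{\Z/2}[\mathcal{S}_\ast(\ell)]$, I would unwind the definitions from Section \ref{sec_baisc}: the vertices of $\mathcal{S}_\ast(\ell)$ correspond to indices $i\in\{1,\ldots,n-1\}$ with $\{i,n\}$ $\ell$-short, of which there are $a_1(\ell)=k$, and a subset $J\subset\{1,\ldots,n-1\}$ is a face precisely when $J\cup\{n\}$ is $\ell$-short --- equivalently, precisely when $\prod_{i\in J}X_i$ is \emph{not} killed by a relation of type (2). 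The displayed quotient is therefore exactly the exterior face ring.

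There is no substantial obstacle; the argument is formal bookkeeping once Lemma \ref{lem_relations} is available. The only point worth verifying carefully is that the mod-2 reduction really is a presentation of $I\!H^\ast_{\Z/2}(\mathcal{M}_d(\ell))$, which is immediate from right exactness of $-\otimes_\Z\Z/2$ applied to the quotient description of the integral ring.
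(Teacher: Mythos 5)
Your argument is correct and takes essentially the same route as the paper: pass from the integral presentation to $\Z/2$ coefficients, kill $R$, and observe that the surviving relations $X_i^2$ and the long-monomial relations are exactly the defining relations of the exterior face ring on $\mathcal{S}_\ast(\ell)$. If anything, you are slightly more careful than the paper's two-line proof, which cites only Proposition~\ref{prop_relations} (a partial list of relations); the step that actually makes the identification valid is, as you note, Lemma~\ref{lem_relations}, which guarantees that every further relation is of the form $RW$ and hence vanishes in the quotient by $\langle R\rangle$.
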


\begin{proof}
 We have the relations from Proposition \ref{prop_relations} which reduce to $X_i^2$ for all $i=1,\ldots,k$ and $X_{i_1}\cdots X_{i_m}$ if $\{i_1,\ldots,i_m,n\}$ is $\ell$-long. Over $\Z/2$ this reduces to the exterior face ring of the short subsets.
\end{proof}

We now need to find a way to detect $R$ in terms of intersection products. This is similar to the argument used in \cite{fahasw}, however, since we have worse information about the intersection ring, the argument is a bit more involved. As a start, we need the following result.

\begin{lemma}
\label{lem_direct_sum}
 Let $d\geq 5$ be odd and $\ell\in \R^n$ be a $d$-regular, generic length vector with $n\geq d+3$ and $\mathcal{M}_d(\ell)\not=\emptyset$. Let $i,j\in \{1,\ldots,n-1\}$ with $i<j$ and let $\ell^{ij}_{+-}=(\ell^j_-)^i_+\in \R^{n-2}$. Then $I^\mathbf{0}H_{\mathbf{d}^{n-2}_d}(\mathcal{M}_d(\ell^{ij}_{+-}))$ is a direct summand of $I^{\mathbf{p}_1}H_{\mathbf{d}^{n-2}_d}(\mathcal{M}_d(\ell^j_-))$ and $I^{\mathbf{p}_1}H_{\mathbf{d}^{n-2}_d}(\mathcal{M}_d(\ell^j_-))$ is a direct summand of  $I^{\mathbf{p}_2}H_{\mathbf{d}^{n-2}_d}(\mathcal{M}_d(\ell))$.
\end{lemma}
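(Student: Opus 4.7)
For the first direct-summand claim I would use Lemma \ref{lem_duals} applied directly inside $\mathcal{M}_d(\ell^j_-)$: the length vector $\ell^j_-\in\R^{n-1}$ inherits genericity and $d$-regularity from $\ell$ (a routine check from the definitions), so Proposition \ref{odd_generators} applied to $\ell^j_-$ presents $I^{\mathbf{p}_1}H_{\mathbf{d}^{n-2}_d}(\mathcal{M}_d(\ell^j_-))$ as a free abelian group on $R^{\ell^j_-}$ together with the classes $X^{\ell^j_-}_{i'}$ indexed by the $\ell^j_-$-short pairs $\{i',n-1\}$. Since $\ell^{ij}_{+-}=(\ell^j_-)^i_+$, the inclusion $\mathcal{M}_d(\ell^{ij}_{+-})\hookrightarrow\mathcal{M}_d(\ell^j_-)$ sends the fundamental class to the basis element $X^{\ell^j_-}_i$. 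Applying Lemma \ref{lem_duals} to $\ell^j_-$ with $J=\{i\}$ then supplies a dual $Y^{\ell^j_-}_{\{i\}}\in I^{\mathbf{0}}H_{d-1}(\mathcal{M}_d(\ell^j_-))$ pairing to $1$ with $X^{\ell^j_-}_i$ and to $0$ with $R^{\ell^j_-}$ and every other $X^{\ell^j_-}_{i'}$, so intersection with $Y^{\ell^j_-}_{\{i\}}$ is the required $\Z$-linear retraction.

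For the second claim, write $\iota\colon\mathcal{M}_d(\ell^j_-)\hookrightarrow\mathcal{M}_d(\ell)$ for the natural inclusion. Using (\ref{formofR}) and the ring structure from Section \ref{sec_bundles}, $\iota_*$ sends $R^{\ell^j_-}\mapsto R\cdot X^-_j$ and $X^{\ell^j_-}_i\mapsto X_i\cdot X^-_j$ in the intersection ring of $\mathcal{M}_d(\ell)$. For each $i$ with $\{i,j,n\}$ $\ell$-short, Lemma \ref{lem_duals} applied to $\ell$ with $J=\{i,j\}$ produces $Y^\ell_{\{i,j\}}$; rewriting $X^-_j=R-X_j$ and combining the Kronecker duality $X_K\cdot Y^\ell_{\{i,j\}}=\delta_{K,\{i,j\}}$ with the vanishing $R\cdot Y^\ell_{\{i,j\}}=0$ shows that intersection with $Y^\ell_{\{i,j\}}$ detects $X_i\cdot X^-_j$ with value $-1$ and vanishes on $R\cdot X^-_j$ and on $X_{i'}\cdot X^-_j$ for $i'\neq i$. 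The remaining image generators are $R\cdot X^-_j$ and those $X_i\cdot X^-_j$ for which $\{i,j,n\}$ is $\ell$-long: Proposition \ref{prop_relations}(2) then gives $X_i\cdot X_j=0$ and so $X_i\cdot X^-_j=R\cdot X_i$. To detect these I would invoke Lemma \ref{lem_geoconst}, or rather its analogue applied inside $\mathcal{M}_d(\ell^j_-)$ and transported along $\iota$, obtaining classes in the dual perversity $I^{\mathbf{p}'_2}H$ pairing to $\pm 1$ with the outstanding generators and to $0$ with those already detected. Assembling the individual retractions yields a single splitting of $\iota_*$, exhibiting its image as a direct summand.

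The main obstacle is precisely this "long" case: because $R$ annihilates every $Y^\ell_J$ by Lemma \ref{lem_duals}, those duals are blind to $R$-multiples such as $R\cdot X_i$ and $R\cdot X^-_j$, so one is forced to supplement them with the Euler-class-style constructions of Lemma \ref{lem_geoconst} in order to complete the system of retractions.
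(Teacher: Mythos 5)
Your approach is genuinely different from the paper's, which does not attempt to split the inclusion via explicit intersection duals at all. The paper instead takes the $\SO(d-1)$-invariant Morse--Bott function on $\mathcal{C}_d(\ell)$ whose absolute minimum is $\mathcal{C}_d(\ell^j_-)$; passing to the quotient gives a filtration $\emptyset\subset\mathcal{M}^0\subset\cdots\subset\mathcal{M}^m=\mathcal{M}_d(\ell)$ with $\mathcal{M}^0\simeq\mathcal{M}_d(\ell^j_-)$, and one shows each $I^{\mathbf{p}_2}H_{\mathbf{d}^{n-2}_d}(\mathcal{M}^{l-1})\to I^{\mathbf{p}_2}H_{\mathbf{d}^{n-2}_d}(\mathcal{M}^l)$ is split-injective by computing the relative groups $I^{\mathbf{p}_2}H_r(\mathcal{N}^{n-3,k_l},\partial_-\mathcal{N}^{n-3,k_l})$ in degrees $\mathbf{d}^{n-2}_d$ and $\mathbf{d}^{n-2}_d+1$ via the local results of \cite{schint} and Poincar\'e--Lefschetz duality. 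This is a purely homological argument; it never needs to decide which products $X_iX_j^-$ are nonzero, and in particular it sidesteps the question of how to detect $R$-multiples.

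Your proposal has a genuine gap, and it is exactly the one you flag. For $i$ with $\{i,j,n\}$ $\ell$-long you have $\iota_*(X^{\ell^j_-}_i)=X_iX^-_j=RX_i$, and every dual $Y_J$ supplied by Lemma \ref{lem_duals} satisfies $R\cdot Y_J=0$, so none of them can see these images. Invoking ``an analogue of Lemma \ref{lem_geoconst}'' does not close this: that lemma produces a single class in $I^{\mathbf{p}_1'}H_{d-1}(\mathcal{M}_d(\ell))$ dual to $X^-_{n-1}$, whereas here you would need a whole family of classes in $I^{\mathbf{p}_2'}H_{2(d-1)}(\mathcal{M}_d(\ell))$ pairing correctly against the various $RX_i$ (and against $RX^-_j$) while vanishing on the already-detected generators; no construction of such classes is given, and there is no obvious way to derive them from Lemma \ref{lem_geoconst}. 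Until that is done, the ``assembled'' retraction is not defined. A secondary point worth checking is your assertion that $\ell^j_-$ inherits $d$-regularity: after reordering $\ell^j_-$, the relevant $(d-1)$-subset that must be short is compared against a complement that is smaller by $2\ell_j$ than in the $\ell$-case, so this does not follow formally from $d$-regularity of $\ell$ and needs an argument (or a different route to the duals in $\mathcal{M}_d(\ell^j_-)$); the paper avoids the issue by working with the Morse filtration on $\mathcal{M}_d(\ell)$ itself.
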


\begin{proof}
Both statements have essentially the same proof, we will therefore focus on the second statement. Note that inclusion $\mathcal{M}_d(\ell^j_-)\subset \mathcal{M}_d(\ell)$ induces a map $I^{\mathbf{p}_1}H_{\mathbf{d}^{n-2}_d}(\mathcal{M}_d(\ell^j_-))\to I^{\mathbf{p}_2}H_{\mathbf{d}^{n-2}_d}(\mathcal{M}_d(\ell))$. To see that this map is split-injective, we use the Morse argument used in \cite[\S 7]{schint}. There is an $\SO(d-1)$-invariant Morse-Bott function on $\mathcal{C}_d(\ell)\to \R$ whose absolute minimum is $\mathcal{M}_d(\ell^j_-)$ and whose other critical manifolds are spheres of dimension $d-2$ of index $k(d-1)$ for $k\in \{1,\ldots,n-3\}$. This gives rise to a filtration of $\mathcal{M}_d(\ell)$
\[
 \emptyset \subset \mathcal{M}^0 \subset \mathcal{M}^1 \subset\cdots \subset \mathcal{M}^m=\mathcal{M}_d(\ell)
\]
where $\mathcal{M}^0$ contains $\mathcal{M}_d(\ell^j_-)$ as a deformation retract in a stratification preserving way so that $I^{\mathbf{p}_1}H_{\mathbf{d}^{n-2}_d}(\mathcal{M}_d(\ell^j_-)) \cong I^{\mathbf{p}_2}H_{\mathbf{d}^{n-2}_d}(\mathcal{M}^0)$. We need to show that $I^{\mathbf{p}_2}H_{\mathbf{d}^{n-2}_d}(\mathcal{M}^{l-1})\to I^{\mathbf{p}_2}H_{\mathbf{d}^{n-2}_d}(\mathcal{M}^l)$ is split-injective for all $l=1,\ldots,m$. We have the long exact sequence
\begin{multline*}
 \cdots \longrightarrow I^{\mathbf{p}_2}H_{\mathbf{d}^{n-2}_d+1}(\mathcal{M}^l,\mathcal{M}^{l-1}) \longrightarrow I^{\mathbf{p}_2}H_{\mathbf{d}^{n-2}_d}(\mathcal{M}^{l-1})\longrightarrow \\ I^{\mathbf{p}_2}H_{\mathbf{d}^{n-2}_d}(\mathcal{M}^l)\longrightarrow  I^{\mathbf{p}_2}H_{\mathbf{d}^{n-2}_d}(\mathcal{M}^l,\mathcal{M}^{l-1}) \longrightarrow \cdots 
\end{multline*}
The proof is going to be along the lines of the proof of \cite[Lm.7.1]{schint}.

Recall notation from \cite[\S 6]{schint}, namely for non-negative integers $m,k$ with $m\geq k$ let
\begin{eqnarray*}
 \mathcal{N}^{m,k} &=& ((D^{d-1})^k\times (D^{d-1})^{m-k})/\SO(d-2),\\
 \partial_-\mathcal{N}^{m,k} &=& (\partial((D^{d-1})^k)\times (D^{d-1})^{m-k})/\SO(d-2).
\end{eqnarray*}
Then $I^{\mathbf{p}_2}H_r(\mathcal{M}^l,\mathcal{M}^{l-1})\cong I^{\mathbf{p}_2}H_r(\mathcal{N}^{n-3,k_l},\partial_-\mathcal{N}^{n-3,k_l})$, where $k_l$ is the index of the critical point contained in $\mathcal{M}^l-\mathcal{M}^{l-1}$, that is, the corresponding critical sphere $S^{d-2}$ is of index $k_l(d-1)$.

Assume that $k_l\leq n-5$. Then
\begin{eqnarray*}
 I^{\mathbf{p}_2}H_r(\mathcal{N}^{n-3,k_l},\partial_-\mathcal{N}^{n-3,k_l}) &\cong & I^\mathbf{0}H_r(\mathcal{N}^{n-5,k_l},\partial_-\mathcal{N}^{n-5,k_l})
\end{eqnarray*}
by \cite[Lm.6.1]{schint}. For $k_l=n-5$ this has only one non-vanishing group in degree $r=\mathbf{d}^{n-2}_d$ by \cite[\S 5]{schint}, which is $\Z$. For $k_l< n-5$ we can use Poincar\'e duality (taking torsion into account, using \cite[Cor.4.4.3]{friedg})
\begin{eqnarray*}
 I^\mathbf{0}H_r(\mathcal{N}^{n-5,k_l},\partial_-\mathcal{N}^{n-5,k_l}) &\cong & I^\mathbf{t}H_{\mathbf{d}^{n-2}_d-r}(\mathcal{N}^{n-5,k_l},\partial_+\mathcal{N}^{n-5,k_l})
\end{eqnarray*}
and the latter is just ordinary homology, which vanishes for $r=\mathbf{d}^{n-2}_d,\mathbf{d}^{n-2}_d+1$ as $\partial_+\mathcal{N}^{n-5,k_l}\not=\emptyset$ for $k_l<n-5$. Therefore the map  $I^{\mathbf{p}_2}H_{\mathbf{d}^{n-2}_d}(\mathcal{M}^{l-1})\to I^{\mathbf{p}_2}H_{\mathbf{d}^{n-2}_d}(\mathcal{M}^l)$ is split-injective if $k_l\leq n-5$.

If $k_l=n-4$, we get
\begin{eqnarray*}
 I^{\mathbf{p}_2}H_r(\mathcal{N}^{n-3,n-4},\partial_-\mathcal{N}^{n-3,n-4}) &\cong & I^{\mathbf{p}_1}H_r(\mathcal{N}^{n-4,n-4},\partial_-\mathcal{N}^{n-4,n-4})
\end{eqnarray*}
and the latter is $\Z/2$ for $r=\mathbf{d}^{n-2}_d+1$ and $0$ for $r=\mathbf{d}^{n-2}_d$ by \cite[Prop.6.3]{schint}. Therefore our map is again split-injective.

To calculate $I^{\mathbf{p}_2}H_r(\mathcal{N}^{n-3,n-3},\partial_-\mathcal{N}^{n-3,n-3})$ a cellular chain complex is identified in \cite[\S 6]{schint}, and the lowest dimensional cell is of dimension $\mathbf{d}^{n-2}_d+2$, compare \cite[Lm.6.2]{schint}. Therefore the homology groups in degree $\mathbf{d}^{n-2}_d$ are not affected, and the result follows.
\end{proof}

\begin{corollary}
 \label{cor_non-zero}
Let $d\geq 5$ be odd and $\ell\in \R^n$ be a $d$-regular, generic length vector with $n\geq d+3$. Let $i,j\in \{1,\ldots,n-1\}$ with $i<j$. If $X_i$ is a non-zero element of $I^{\mathbf{p}_1}H_{\mathbf{d}^{n-1}_d}(\mathcal{M}_d(\ell))$, then $X_iX_j^-$ is a non-zero element of $I^{\mathbf{p}_2}H_{\mathbf{d}^{n-2}_d}(\mathcal{M}_d(\ell))\otimes \Z/2$.
\end{corollary}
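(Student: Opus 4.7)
The plan is to show that $X_i \cdot X_j^-$ is represented in $I^{\mathbf{p}_2}H_{\mathbf{d}^{n-2}_d}(\mathcal{M}_d(\ell))$ by the fundamental class of $\mathcal{M}_d(\ell^{ij}_{+-})$ viewed inside $\mathcal{M}_d(\ell)$, and then to detect non-triviality with $\Z/2$ coefficients by means of Lemma \ref{lem_direct_sum}.

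First I would verify that $\mathcal{M}_d(\ell^{ij}_{+-})$ is non-empty. The vector $\ell^{ij}_{+-}$ has entries $\ell_k$ for $k\in\{1,\ldots,n-1\}\setminus\{i,j\}$ together with a final entry $\ell_n-\ell_j+\ell_i$, and the total sum of these entries equals $L-2\ell_j$, where $L=\sum_k\ell_k$. A direct entry-by-entry computation shows the closure condition for $\ell^{ij}_{+-}$ splits into two families: the merged entry meets the bound iff $\{i,n\}$ is $\ell$-short, which holds by the hypothesis $X_i\neq 0$; and each remaining entry $\ell_k$ meets the bound iff $\{k,j\}$ is $\ell$-short. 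If $\{j,n\}$ is $\ell$-short, orderedness immediately gives $\ell_k+\ell_j\leq\ell_n+\ell_j<L/2$ for all $k\neq j,n$. If $\{j,n\}$ is $\ell$-long then $X_j=0$, so the relation $R=X_j+X_j^-$ from (\ref{formofR}) forces $X_j^-=R$, which is non-zero because $R$ is a basis element of $I^{\mathbf{p}_1}H_{\mathbf{d}^{n-1}_d}(\mathcal{M}_d(\ell))$ by Proposition \ref{odd_generators} together with Lemmas \ref{lem_duals} and \ref{lem_geoconst}; hence $\mathcal{M}_d(\ell^j_-)$ is non-empty and the analogous entry-wise analysis for $\ell^j_-$ yields $\{k,j\}$ $\ell$-short for every $k\neq j,n$.

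Second, I would identify the intersection product $X_i\cdot X_j^-$ with the image of $[\mathcal{M}_d(\ell^{ij}_{+-})]$ under the composite stratum-preserving inclusion $\mathcal{M}_d(\ell^{ij}_{+-})\hookrightarrow\mathcal{M}_d(\ell^j_-)\hookrightarrow\mathcal{M}_d(\ell)$. This is the same geometric mechanism used for the products $X_iX_j$ and $X_iX_i^-$ in the proof of Proposition \ref{prop_relations}: the embedded pseudomanifolds $\mathcal{M}_d(\ell^i_+)$ and $\mathcal{M}_d(\ell^j_-)$ inside $\mathcal{M}_d(\ell)$ meet precisely along the locus where link $i$ is parallel to link $n$ and link $j$ is antiparallel to it, and that locus is exactly $\mathcal{M}_d(\ell^{ij}_{+-})$. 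Finally, Lemma \ref{lem_direct_sum} applied twice shows that $I^\mathbf{0}H_{\mathbf{d}^{n-2}_d}(\mathcal{M}_d(\ell^{ij}_{+-}))$ sits as a direct summand of $I^{\mathbf{p}_2}H_{\mathbf{d}^{n-2}_d}(\mathcal{M}_d(\ell))$ via this very inclusion; the dimension assumption $n\geq d+3$ is what is needed to apply the lemma. With $\Z/2$ coefficients the fundamental class $[\mathcal{M}_d(\ell^{ij}_{+-})]$ is non-zero because $\mathcal{M}_d(\ell^{ij}_{+-})$ is a non-empty closed pseudomanifold of dimension $\mathbf{d}^{n-2}_d$, so its image, which equals $X_i\cdot X_j^-$ by step~2, is non-zero in $I^{\mathbf{p}_2}H_{\mathbf{d}^{n-2}_d}(\mathcal{M}_d(\ell))\otimes\Z/2$.

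The main obstacle I anticipate is step~2, namely the identification of the algebraic intersection product with the geometric intersection of the two embedded pseudomanifolds. This requires stratified transversality of $\mathcal{M}_d(\ell^i_+)$ and $\mathcal{M}_d(\ell^j_-)$ in $\mathcal{M}_d(\ell)$, but the mutually exclusive ``parallel'' and ``antiparallel'' alignments of links $i$ and $j$ relative to link $n$ should make this step essentially parallel to the transversality arguments already implicit in Proposition \ref{prop_relations}.
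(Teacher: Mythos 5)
Your proposal matches the paper's proof: the paper likewise identifies $X_iX_j^-$ with the image of the fundamental class of $\mathcal{M}_d(\ell^i_+)\cap\mathcal{M}_d(\ell^j_-)=\mathcal{M}_d(\ell^{ij}_{+-})$, verifies non-emptiness from $\{i,n\}$ being $\ell$-short, and concludes via Lemma~\ref{lem_direct_sum}. Your case analysis on whether $\{j,n\}$ is $\ell$-short or $\ell$-long (with the indirect detour through $R=X_j^-\neq 0$) is valid but unnecessary: $d$-regularity with $d\geq 3$ already makes $\{n-2,n-1\}\subset\{n-d+1,\ldots,n-1\}$ $\ell$-short, and orderedness then gives that every pair $\{k,j\}\subset\{1,\ldots,n-1\}$ is $\ell$-short, which is exactly the remaining condition for $\mathcal{M}_d(\ell^{ij}_{+-})\neq\emptyset$.
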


\begin{proof}
Observe that $X_iX_j^-$ is represented by $\mathcal{M}_d(\ell^i_+)\cap \mathcal{M}_d(\ell^j_-)$, and since $X_i\not=0$ implies $\{i,n\}$ is $\ell$-short, this set is non-empty. Therefore $X_iX_j^-$ is the image of the fundamental class in $I^\mathbf{0}H_{\mathbf{d}^{n-2}_d}(\mathcal{M}_d(\ell^i_+)\cap \mathcal{M}_d(\ell^j_-))$. By Lemma \ref{lem_direct_sum} it follows that $0\not= X_iX_j^- \in I^{\mathbf{p}_2}H_{\mathbf{d}^{n-2}_d}(\mathcal{M}_d(\ell))\otimes \Z/2$.
\end{proof}

\begin{proof}[Proof of Theorem \ref{main_theorem}]
 The cases $n=2,3$ were covered in \cite{fahasw, schuew}, while $d\geq 4$ even is in \cite{schint}. It remains to consider $d\geq 5$ odd. That $\ell$ is $d$-regular implies $n\geq d+1$. If $n=d+1$, there is only one chamber up to permutation, so there is nothing to show. Similarly, if $n=d+2$, then $a_2(\ell)=0$ and the chamber is determined by $a_1(\ell)$, which is recovered from $I^{\mathbf{p}_1}H_{\mathbf{d}^{n-1}_d}(\mathcal{M}_d(\ell))$, a homeomorphism invariant.

We can therefore assume that $n\geq d+3$. Assume also that $\ell$ is ordered. We want to say that $R$ is the unique element of $I^{\mathbf{p}_1}H_{\mathbf{d}^{n-1}_d}(\mathcal{M}_d(\ell))\otimes \Z/2$ such that multiplication by $R$ induces the squaring homomorphism $$\mathrm{Sq}\colon I^{\mathbf{p}_1}H_{\mathbf{d}^{n-1}_d}(\mathcal{M}_d(\ell))\otimes \Z/2 \to I^{\mathbf{p}_2}H_{\mathbf{d}^{n-2}_d}(\mathcal{M}_d(\ell))\otimes \Z/2.$$
By Proposition \ref{prop_relations}, $R$ certainly has this property. To get uniqueness, let us also assume that $a_2(\ell)>0$, so that there exist $i\not=j$ with $X_iX_j\not=0$ (we can use $i=1$, $j=2$ since $\ell$ is ordered). Now let
\begin{eqnarray*}
 R'&=& \varepsilon R + X_{i_1}+\cdots + X_{i_u} \,\,\, \in \,\,\, I^{\mathbf{p}_1}H_{\mathbf{d}^{n-1}_d}(\mathcal{M}_d(\ell))\otimes \Z/2
\end{eqnarray*}
satisfy $R'X=X^2$ for all $X\in I^{\mathbf{p}_1}H_{\mathbf{d}^{n-1}_d}(\mathcal{M}_d(\ell))\otimes \Z/2$. In particular
\begin{eqnarray*}
 X_j^2&=& \varepsilon X_j^2 + X_{i_1}X_j+\cdots + X_{i_u}X_j
\end{eqnarray*}
for all $j=1,\ldots,n-1$. If any $X_{i_v}X_j$ were non-zero for $j\not=i_v$, then $X_j^2 \cdot Y_{i_v,j}\not=0$ for the dual $Y_{i_v,j}$ from Lemma \ref{lem_duals}. But since $X_j^2\cdot Y_{i_v,j}= X_j\cdot R \cdot Y_{i_v,j} =0$, we get $X_{i_v}X_j=0$ for all $i_v\not= j$. In particular $i_v\not=1$ or $2$ by the assumption $X_1X_2\not=0$. By using $j=i_1$ (assuming that $u\geq 1$) we also get
\begin{eqnarray*}
 X_{i_v}^2 &=& \varepsilon X_{i_v}^2 + X_{i_v}^2
\end{eqnarray*}
by multiplication with $R'$, but we also have
\begin{eqnarray*}
 X_{i_v}^2 &=& (X_1+X_1^-) X_{i_v}\\
&=& X_1^- X_{i_v}
\end{eqnarray*}
by multiplication with $R=X_1+X_1^-$. Therefore $X_{i_v}^2\not=0$ by Lemma \ref{cor_non-zero} which means $\varepsilon=0\in \Z/2$. Now
\begin{eqnarray*}
 X_1^2 &=& (X_{i_1}+\cdots + X_{i_u}) X_1\\
&=&0
\end{eqnarray*}
contradicting $X_1^2=(X_{i_u}+X_{i_u}^-)X_1=X_{i_u}^-\cdot X_1\not=0$.

So under the extra condition that $a_2(\ell)>0$ we get that $R$ is the only element in $I^{\mathbf{p}_1}H_{\mathbf{d}^{n-1}_d}(\mathcal{M}_d(\ell))\otimes \Z/2$ such that multiplication by $R$ gives $\mathrm{Sq}$.

Let $\ell,\ell'\in \R^n$ be generic, $d$-regular length vectors with $\mathcal{M}_d(\ell)$ homeomorphic to $\mathcal{M}_d(\ell')$. By \cite{gorma2} there is an isomorphism of intersection rings $I\!H^\ast(\mathcal{M}_d(\ell))\cong I\!H^\ast(\mathcal{M}_d(\ell'))$. If the Euler class $R_\ell$ of $\ell$ would not be unique with the squaring property (after tensoring with $\Z/2$), then neither would be $R_{\ell'}$ and we would get $a_2(\ell)=0=a_2(\ell')$. But, up to permutation, the chamber of any $\ell$ with $a_2(\ell)>0$ is determined by $a_1(\ell)$ which we can obtain from the dimension of $I^{\mathbf{p}_1}H_{\mathbf{d}^{n-1}_d}(\mathcal{M}_\ell)\otimes \Z/2$. So $\ell$ and $\ell'$ are in the same chamber up to permutation.

We can therefore assume that both $R_\ell$ and $R_{\ell'}$ are unique with the squaring property, and the isomorphism of intersection rings induces an isomorphism of exterior face rings $\Lambda_{\Z/2}[\mathcal{S}_\ast(\ell)] \cong \Lambda_{\Z/2}[\mathcal{S}_\ast(\ell')]$ by Corollary \ref{cor_thefacering}, which induces an isomorphism of simplicial complexes $\mathcal{S}_\ast(\ell)\cong \mathcal{S}_\ast(\ell')$ by \cite{gubela}. This implies that $\ell$ and $\ell'$ are in the same chamber up to permutation as in \cite{fahasw}.
\end{proof}

\section{Proof of Lemma \ref{lem_geoconst}}
\label{sec_geoconst}
Let us begin with the strategy of the proof. From the equivariant Morse-Bott function in \cite{schuet} we get an equivariant neighborhood of $\mathcal{C}_d(\ell^{n-1}_-)$ in $\mathcal{C}_d(\ell)$ of the form $\mathcal{C}_d(\ell^{n-1}_-)\times D^{d-1}$ where $\SO(d-1)$ acts diagonally on the factors. As $d\geq 5$ is odd, there is $k\geq 2$ with $d-1=2k$, and we can write $\R^d=\R\times \C^k$. We want to construct an $S^1$-equivariant map $f\colon S^{d-2} \to \mathcal{C}_d(\ell^{n-1}_-)$ which extends (non-equivariantly) to $f\colon D^{d-1} \to \mathcal{C}_d(\ell^{n-1}_-)$ and which is constant near $0\in D^{d-1}$ and so that $f(0)$ has rank at least $d-1$. Then $f$ induces a map from complex projective space $F\colon \CP^k \to \mathcal{M}_d(\ell)$ via $F(z)=q(f(z),z)$, where $q\colon \mathcal{C}_d(\ell^{n-1}_-)\times D^{d-1} \to \mathcal{M}_d(\ell)$ is inclusion followed by the quotient map. Since $f$ is constant near $0$ and has rank $d-1$ we get that $F$ is an embedding near the point corresponding to $0$, and $F(\CP^k)$ intersects $\mathcal{M}_d(\ell^{n-1}_-)$ transversely in exactly one point, while $F(\CP^k) \cap \mathcal{M}_d(\ell^{n-1}_+)=\emptyset$. The required element is then $Y=F_\ast[\CP^k]$.

To get the $S^1$-equivariant map $f$ we will actually construct an $(S^1)^k$-equivariant map, where each factor $S^1$ acts on its respective coordinate in $\C^k$. As a result, the map $F$ will hit several singular strata in $\mathcal{M}_d(\ell)$. The next lemma gives a criterion so that $F$ induces an element of $I^{\mathbf{p}_1'}H_{d-1}(\mathcal{M}_d(\ell))$.

\begin{lemma}\label{lem_strata}
 Let $F\colon \CP^k \to \mathcal{M}_d(\ell)$ be a stratum-preserving map for some stratification of $\CP^k$ that has only even-dimensional strata. Assume that for $l=0,\ldots, k-2$ the strata contained in $F^{-1}(\mathcal{N}_{3+2l}(\ell)-\mathcal{N}_{2+2l}(\ell))$ have dimension at most $2l$, and that $F^{-1}(\mathcal{N}_{2+2l}(\ell)-\mathcal{N}_{1+2l}(\ell))=\emptyset$. Then $F$ induces a homomorphism
\[
 F_\ast \colon I^\mathbf{0}H_{d-1}(\CP^k) \to I^{\mathbf{p}_1'}H_{d-1}(\mathcal{M}_d(\ell)).
\]
\end{lemma}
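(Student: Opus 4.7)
The plan is to verify the hypothesis of \cite[Lm.5.2]{schint} (the same criterion invoked earlier in the paper for $i_\ast$ and $p_\ast$): a stratum-preserving map $F$ induces a homomorphism on intersection homology from perversity $\mathbf{0}$ to perversity $\mathbf{p}_1'$ provided that, for every stratum $T$ of $\CP^k$ mapping under $F$ into a stratum $S$ of $\mathcal{M}_d(\ell)$,
\[
\mathbf{0}(T) - \codim_{\CP^k}(T) \,\,\leq\,\, \mathbf{p}_1'(S) - \codim_{\mathcal{M}_d(\ell)}(S).
\]

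The first step is to compute $\mathbf{p}_1'$ explicitly. Writing $d=2k+1$ and indexing the singular strata of $\mathcal{M}_d(\ell)$ by $k_0\in\{2,\ldots,2k-1\}$, the relation $\mathbf{p}_1+\mathbf{p}_1'=\mathbf{t}$ together with $\mathbf{p}_1(k_0)=k_0$ and $\mathbf{t}(k_0)=\mathbf{c}^n_{d,k_0}-2$ yields $\mathbf{p}_1'(k_0)=\mathbf{c}^n_{d,k_0}-k_0-2$, so the right-hand side above reduces to $-(k_0+2)$ for $S=\mathcal{N}_{d-k_0}(\ell)-\mathcal{N}_{d-k_0-1}(\ell)$. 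On the $\CP^k$ side, any stratum $T$ of even dimension $2m$ contributes $2m-2k$ on the left, so the inequality becomes the clean bound $\dim T \leq 2k-2-k_0$.

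Next I would split the singular strata of $\mathcal{M}_d(\ell)$ according to the parity of the rank $r=d-k_0$. For odd rank $r=3+2l$ with $l\in\{0,\ldots,k-2\}$ we get $k_0=2k-2-2l$ and the bound becomes $\dim T \leq 2l$, which is exactly the first hypothesis of the lemma. For even rank $r=2+2l$, the second hypothesis forces $F^{-1}(\mathcal{N}_r(\ell)-\mathcal{N}_{r-1}(\ell))=\emptyset$, so no stratum $T$ of $\CP^k$ maps into such an $S$ and the criterion is vacuous. Because odd and even ranks together exhaust all singular strata of $\mathcal{M}_d(\ell)$, the map criterion holds on every stratum pair and $F$ descends to the claimed homomorphism on intersection homology.

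I do not anticipate any serious obstacle; the proof is essentially a bookkeeping exercise. The only point requiring care is the reconciliation of the three indexings in play (the codimension parameter $k_0$, the rank $r=d-k_0$, and the parameter $l$ of the statement), and the observation that the even-dimensional-strata assumption on $\CP^k$ is essential: for $k_0$ with odd rank $r$ it is precisely what makes $\dim T\leq 2l$ the tightest possible bound, while for $k_0$ with even rank the emptiness of the preimage sidesteps the potential parity mismatch altogether.
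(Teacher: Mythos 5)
Your proposal reproduces the paper's own argument: compute $\mathbf{p}_1'$ from $\mathbf{p}_1+\mathbf{p}_1'=\mathbf{t}$, verify the perversity-versus-codimension inequality of \cite[Lm.5.2]{schint} for each pair of strata, and note that the dimension bound on the $\CP^k$-strata is exactly the lemma's hypothesis for odd rank $3+2l$ while the emptiness hypothesis makes the criterion vacuous for even rank. The only difference is cosmetic: you parametrize by the codimension index $k_0$ and derive $\dim T\leq 2k-2-k_0$, whereas the paper writes the same stratum as $\mathcal{N}_{d-2(k-l-1)}(\ell)-\mathcal{N}_{d-1-2(k-l-1)}(\ell)$ and states the bound as codimension at least $2(k-l)$.
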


\begin{proof}
Note that the condition of the Lemma states that a stratum $S$ which is send to $\mathcal{N}_{d-2(k-l-1)}(\ell)-\mathcal{N}_{d-1-2(k-l-1)}(\ell)$ has codimension at least $2(k-l)$. As 
\begin{eqnarray*}
\mathbf{p}_1'(\mathcal{N}_{d-2(k-l-1)}(\ell)-\mathcal{N}_{d-1-2(k-l-1)}(\ell))&=&\mathbf{c}^n_{d,2(k-l-1)}-2(k-l),
\end{eqnarray*}
we see that \cite[Lm.5.2]{schint} applies.
\end{proof}

Let $\Delta^{k-1}=\{(t_1,\ldots,t_k)\in \R^k\,|\, t_i\in [0,1], \, t_1+\cdots t_k=1\}$ be the standard $(k-1)$-simplex. Then $\Delta^{k-1}\cong S^{d-2}/(S^1)^k$ and we can split the quotient map via $j\colon \Delta^{k-1}\to S^{d-2}$ given by $j(t_1,\ldots,t_k)=(\sqrt{t_1},\ldots,\sqrt{t_k})\in \C^k$.

So in order to get an $(S^1)^k$-equivariant map $f\colon S^{d-2}\to \mathcal{C}_d(\ell^{n-1}_-)$ it is enough to define $f$ on $\Delta^{k-1}$. Let us start with a point in $x\in \mathcal{C}_d(\ell^{n-1}_-)$. Then $x=(x_1,\ldots,x_n)$ with $x_n=e_0$, $x_{n-1}=-e_0$, where we think of $x_j\in \R^d=\R\times \C^k$ and we think of the $\R$-coordinate as the $0$-th coordinate.

Since $\ell$ is $d$-regular and ordered, we get that $\{n-(d-1),\ldots,n-1\}$ is $\ell$-short. Let $K\subset \{1,\ldots,n-d\}$ be such that $K\cup \{n-(d-1),\ldots,n-1\}$ is long, while removing any element of $K$ would make it short.

Define $(x_1,\ldots,x_n)\in \mathcal{C}_d(\ell^{n-1}_-)$ as follows. If $j\notin K\cup \{n-(d-1),\ldots,n-1\}$, let $x_j=e_0$. If $j\in K \cup \{n-(d-1),\ldots,n-4,n-1\}$, let $x_j=e_0$. Also, let $x_{n-3}=(x_{n-3,0},x_{n-3,1},0,\ldots,0)$, $x_{n-2}=(x_{n-2,0},x_{n-2,1},0,\ldots,0)$ with $x_{n-3,0},x_{n-2,0}\in (-1,0) \subset \R$ and $x_{n-3,1}, x_{n-2,1}\in \C$ be imaginary so that $(x_1,\ldots,x_n)\in \mathcal{C}_d(\ell^{n-1}_-)$. Note that these values can be chosen by the way $K$ was defined, compare Figure \ref{fig_snake_rot}.
\begin{figure}[ht]
\begin{center}
\includegraphics[height=2cm,width=6cm]{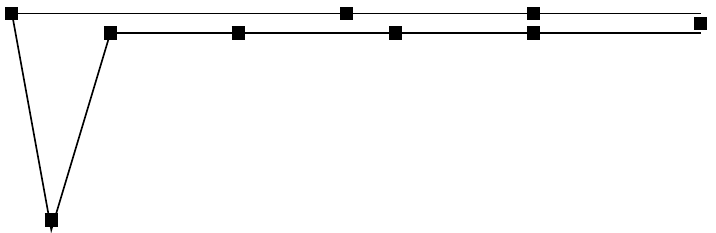}
\caption{\label{fig_snake_rot}The configuration $x$}
\end{center}
\end{figure}
The values $x_{n-3,1}, x_{n-2,1}$ are chosen imaginary, so that $e_0,e_1,x_{n-2}$ have rank $3$. Note that $e_1\in S^{d-2} \subset \C^k$ corresponds to $x_{n-1}$, once $\mathcal{C}_d(\ell^{n-1}_-)\times D^{d-1}$ is embedded in $\mathcal{C}_d(\ell)$.

We start defining $f$ by setting $f(1,0,\ldots,0)=(x_0,\ldots,x_n)$. To extend $f$, for $j=2,\ldots k-1$ and $t\in \Delta^{k-1}$ let\footnote{The letter $i$ now stands for the complex unit $i\in \C$, it will no longer be used as an index.}
\begin{eqnarray*}
 x_{n-2j}(t)&=&(-\sqrt{1-(t_j\varepsilon)^2-(t_k\varepsilon)^2},0,\ldots,0,t_j\varepsilon,0\,\ldots,0,t_k\varepsilon)\\
 x_{n-2j-1}(t)&=& (-\sqrt{1-(t_j\varepsilon)^2-(t_k\varepsilon)^2},0,\ldots,0,it_j\varepsilon,0\,\ldots,0,it_k\varepsilon)\\
 x_{n-(d-1)}(t)&=& (-\sqrt{1-(t_k\varepsilon)^2},0,\ldots,0,it_k\varepsilon)
\end{eqnarray*}
where $\varepsilon>0$ is small, and the non-zero entry is in the $j$-th complex coordinate of $\R\times \C^k$.

If $\varepsilon$ is small enough, we can define $x_{n-3}(t)\in S^1\times \{0\}\subset S^{d-1}$ and $x_{n-2}(t)\in S^{d-1}$ so that
\[
 (x_1,\ldots, x_{n-d},x_{n-(d-1)}(t),\ldots, x_{n-4}(t),x_{n-3}(t),x_{n-2}(t),x_{n-1},x_n)\in \mathcal{C}_d(\ell^{n-1}_-)
\]
with $t\in \Delta^{k-1}$. Note that $x_{n-2j}$ and $x_{n-2j-1}$ depend on $t_j$ for $j=2,\ldots,k-1$ and $t_k$. Basically, we let $x_{n-2}$ compensate for the non-zero entries in the other complex variables, thus requiring that the absolute values of the $0$-th and first coordinate in $x_{n-3}$ and $x_{n-2}$ will be slightly less. We can make this depend continuously on $t$, thus giving us a map $\Delta^{k-1}\to \mathcal{C}_d(\ell^{n-1}_-)$.

We still need to slightly change this map. As $j(\Delta^{k-1})$ is a contractible subset of $S^{d-2}$, there is a map $A\colon \Delta^{k-1} \to U(k)$ with $A(t)\cdot e_1=j(t)$. Let $f_{n-3}(t)=A(t)\cdot x_{n-3}(t)$, and $f_{n-j}(t)=x_{n-j}(t)$ for $j=4,\ldots, n-1$. Choose $f_{n-2}(t)$ so that $(f_1,\ldots,f_{n-2},-e_0,e_0)\in \mathcal{C}_d(\ell^{n-1}_-)$, that is, we have a map $f\colon \Delta^{k-1}\to \mathcal{C}_d(\ell^{n-1}_-)$.

In the next lemma we think of $(f(t),j(t))$ as an element of $\mathcal{C}_d(\ell)$ via the embedding $\mathcal{C}_d(\ell^{n-1}_-)\times D^{d-1}$. We also set $n(t)$ to be the number of coordinates $j$ with $t_j=0$ for any $t\in \Delta^{k-1}$.

\begin{lemma}\label{lem_ranks}
 We have $\mathrm{rank}(f(t),j(t))= d-1$ if $n(t)=0$, and $\mathrm{rank}(f(t),j(t))= d-2 n(t)$, if $n(t)\geq 1$. Furthermore, if $t=(t_1,\ldots,t_k)$ satisfies $t_j=0$ for $j\in \{1,\ldots,k\}$, then the $j$-th complex coordinate of each $f_m(t)$ is $0$, $m=1,\ldots,n$.
\end{lemma}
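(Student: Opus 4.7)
The plan is a direct coordinate computation in the decomposition $\R^d = \R e_0 \oplus \bigoplus_{p=1}^k(\R e_p^R \oplus \R e_p^I)$, where $e_p^R$ and $e_p^I$ denote the real and imaginary unit vectors in the $p$-th complex slot of $\C^k$. Write $S = \{p : t_p = 0\}$ so that $|S| = n(t)$.

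For the second (vanishing) assertion I would inspect each column individually. For $m \in \{1, \ldots, n-2k-1\} \cup \{n-1, n\}$ the column is $\pm e_0$, so there is nothing to check. For $m = n-2j'$, $n-2j'-1$ with $j' \in \{2, \ldots, k-1\}$, and for $m = n-(d-1)$, the explicit formulas show that the only non-zero complex coordinates are the $j'$-th and $k$-th, proportional to $t_{j'}\varepsilon$ or $t_k\varepsilon$, so they vanish as soon as the corresponding index lies in $S$. For $m = n-3$, writing $x_{n-3}(t) = (x_{n-3,0}(t), w(t), 0, \ldots, 0)$ with $w(t)$ purely imaginary, the equivariance $A(t)\cdot e_1 = j(t)$ gives $f_{n-3}(t) = (x_{n-3,0}(t), w(t)\cdot j(t))$, whose $p$-th complex coordinate $w(t)\sqrt{t_p}$ vanishes exactly when $t_p = 0$. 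Finally, the closure relation $\sum_m \ell_m f_m(t) = -(\ell_n - \ell_{n-1})e_0$ has trivial $p$-th complex coordinate, so $f_{n-2}(t)$ must also vanish in its $p$-th complex coordinate whenever every other column does.

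For the rank, via the embedding $\mathcal{C}_d(\ell^{n-1}_-) \times D^{d-1} \hookrightarrow \mathcal{C}_d(\ell)$ the configuration $(f(t), j(t))$ has as its columns $f_1(t), \ldots, f_{n-2}(t), x_{n-1}, x_n$, with $x_n = e_0$ and $x_{n-1} = (0, j(t))$ on the boundary $|j(t)| = 1$. The closure forces $f_{n-2}(t)$ into the span of the other columns together with $e_0$, so it adds nothing to the rank. Combined with the just-established vanishing of the $p$-th complex coordinates (and the analogous vanishing $(j(t))_p = \sqrt{t_p} = 0$), this yields the upper bound $\rank(f(t), j(t)) \leq 1 + 2(k - n(t)) = d - 2n(t)$.

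For the matching lower bound I would use that $w(t) \in i\R$ to split the non-constant, rank-contributing columns into two disjoint families: the \emph{real side} $\{x_{n-2j'}(t)\}_{j' = 2}^{k-1} \cup \{x_{n-1}\}$, contributing only to the $e_p^R$-directions, and the \emph{imaginary side} $\{x_{n-2j'-1}(t)\}_{j' = 2}^{k-1} \cup \{x_{n-(d-1)}(t)\}$ together with the $\C^k$-part of $f_{n-3}(t)$, contributing only to the $e_p^I$-directions. The crucial asymmetry is that the imaginary side carries the extra column $x_{n-(d-1)}(t)$, whose only non-zero $\C^k$-entry lies in the $k$-th slot and equals $t_k\varepsilon e_k^I$, with no analogue on the real side. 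When $n(t) \geq 1$, a short case distinction according to whether $k \in S$ shows that both sides achieve their maximum possible dimensions, namely $k - n(t)$ each, yielding total rank $1 + 2(k - n(t)) = d - 2n(t)$. When $n(t) = 0$, however, the real side has only $k-1$ generators in its $k$-dimensional target subspace and hence spans just a hyperplane there, while the imaginary side has $k$ generators and spans all of $\bigoplus_p \R e_p^I$, giving total rank $1 + (k-1) + k = d - 1$. The main obstacle in the proof will be controlling this asymmetric count carefully; the underlying linear-independence checks reduce to inspecting $t$-scaled coefficient matrices of visible triangular form.
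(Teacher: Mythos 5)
Your approach is essentially the paper's: decompose $\R^d = \R e_0 \oplus \bigoplus_p(\R e_p^R \oplus \R e_p^I)$, establish the vanishing assertion by column inspection, read off the upper bound $\rank \leq d-2n(t)$, and then count rank contributions separately on the ``real'' and ``imaginary'' sides of $\C^k$. The paper does this same count column by column, using the observation (which you also make) that $f_{n-1}$ lands in the real side, $f_{n-3}$ in the imaginary side, $x_{n-(d-1)}$ is the extra imaginary column, and $f_{n-2}$ is determined by closure. Your real/imaginary split is a clean way of packaging that and I'd call it the same argument.

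The weak point is the lower bound when $n(t)\geq 1$, where you assert that ``a short case distinction according to whether $k\in S$'' gives rank $k-n(t)$ on each side but do not carry it out. The paper's own case split is different — it first distinguishes $t_1=0$ from $t_1>0$, and then, within $t_1>0$, distinguishes $t_k=0$ from $t_k>0$ — and it does not actually treat the branch $t_1=0$. Your split on $k\in S$ alone is not sufficient: consider the subcase $k\notin S$, $1\in S$, $S\cap\{2,\ldots,k-1\}=\emptyset$, i.e. $S=\{1\}$ and $n(t)=1$. There the real side consists of the $k-2$ columns $t_{j'}\varepsilon e_{j'}^R + t_k\varepsilon e_k^R$, $j'=2,\ldots,k-1$ (which are linearly independent), together with $x_{n-1}$, whose $\C^k$-part is by $\SO(d-1)$-equivariance a real multiple of $j(t)=(0,\sqrt{t_2},\ldots,\sqrt{t_k})$. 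Whether $x_{n-1}$ increases the rank to $k-1=k-n(t)$ is then a non-trivial linear-independence condition on $t$: for $k=3$, $t=(0,t_2,t_3)$, the two real-side columns are $(0,t_2\varepsilon,t_3\varepsilon)$ and a multiple of $(0,\sqrt{t_2},\sqrt{t_3})$, which are proportional exactly when $t_2=t_3$. So your claim that ``both sides achieve $k-n(t)$'' needs an argument in this subcase, and it is precisely the subcase the paper flags and then drops. If you want a clean proof you should either show why the degenerate locus is empty given the actual form of the embedding $\mathcal{C}_d(\ell^{n-1}_-)\times D^{d-1}\hookrightarrow\mathcal{C}_d(\ell)$, or account for it when checking that $F$ is stratum-preserving in the sense of Lemma \ref{lem_strata}.

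One small imprecision: you write $x_{n-1}=(0,j(t))$; the $e_0$-component of $x_{n-1}$ is non-zero (close to $-1$), which does not affect the $\C^k$-rank count but is worth stating correctly since the closure relation involves $e_0$.
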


\begin{proof}
Let us write $(f(t),j(t))=(f_1,\ldots,f_{n-2},f_{n-1})\in \mathcal{C}_d(\ell)$, where$f_{n-1}$ only depends on $j(t)$. We ignore the $n$-th coordinate, as it is $e_0\in S^0$.

Since $f_j\in S^0$ for $j\leq n-d$ and $j\geq n-1$, and $f_{n-2}$ is a linear combination of the other elements, the rank can be at most $d-1$. Now assume that $t_j=0$ for $j\in \{1,\ldots,k\}$. None of the elements $f_j$ has a non-zero entry in the $j$-th complex coordinate, so again the rank can be at most $ d-2 n(t)$.

Let $n(k)=0$. Then $f_{n-1},f_{n-3}$ are the only elements (apart from $f_{n-2}$) with non-zero entries in the first complex coordinate, and these two elements are linearly independent, as $f_{n-1}$ is only in the real part, while $f_{n-3}$ is only in the imaginary part. Apart from $f_{n-1},f_{n-2},f_{n-3}$, $f_{n-4},f_{n-5}$ are the only remaining pieces with non-zero entry in the second complex coordinate. Continuing, we see that $f_{n-2j},f_{n-2j-1}$ increase the rank by two for each $j=2,\ldots,k-1$. Finally $f_{n-(d-1)}$ and $x_n$ increase the rank to $d-1$, as required.

If $n(k)>0$ let us distinguish the cases $t_1=0$ and $t_1>0$. If $t_1>0$, then $f_{n-1},f_{n-3}$ are the only elements with non-zero entries in the first complex coordinate. If $t_k=0$, then each $j\in \{2,\ldots,k-1\}$ with $t_j>0$ produces two elements $f_{n-2j},f_{n-2j-1}$ with only the $j$-th complex coordinate non-zero. Together with the elements in $S^0$, we see the rank is $d-2 n(t)$.

If $t_1>0$ and $t_k>0$, there is a $j\in \{2,\ldots,k-1\}$ with $t_j=0$. Then $f_{n-2j},f_{n-2j-1}$ have only something non-zero in the $k$-th complex coordinate, so they increase the rank by $2$. All other $j\in \{2,\ldots,k-1\}$ with $t_j>0$ have $f_{n-2j},f_{n-2j-1}$ as the only remaining elements with $j$-th complex coordinate non-zero, so they also increase the rank by $2$. Together with the elements in $S^0$, the rank is again $d-2 n(t)$.
\end{proof}

We thus have a map $\bar{f}\colon \Delta^{k-1} \to \mathcal{C}_d(\ell)$ given by $\bar{f}(t)=(f(t),j(t))$, which we can extend to an $(S^1)^k$-equivariant map $\tilde{f}\colon (S^1)^k\times \Delta^{k-1} \to \mathcal{C}_d(\ell)$, where $(S^1)^k$ acts on $\C^k$ (on both sides) by coordinate-wise multiplication. Furthermore, there is a surjection $p\colon (S^1)^k\times \Delta^{k-1} \to S^{d-2}$, which is also $(S^1)^k$-equivariant, and which induces the required $(S^1)^k$-equivariant map $$f\colon S^{d-2} \to \mathcal{C}_d(\ell)$$ by Lemma \ref{lem_ranks}.

For every subset $L\subset \{1,\ldots,k\}$ we get a subsphere $S_L\subset S^{d-2}$ of dimension $2|L|-1$, whose entries are those points $(z_1,\ldots,z_k)\in S^{d-2}$ with $z_j=0$ for $j\notin L$. It follows from Lemma \ref{lem_ranks} that $f$ followed by the quotient map from $\mathcal{C}_d(\ell)$ to $\mathcal{M}_d(\ell)$ is a stratified map, if we stratify $S^{d-2}$ by the $S_L$.

We need to extend $f$ to $D^{d-1}$. The basic idea is to stretch the robot arm made up of the points $f_{n-1},\ldots,f_{n-(d-1)}$ into a straight line, pointing in the direction of $-e_0$. Recall we write
\begin{eqnarray*}
 f(z_1,\ldots,z_k)&=&(f_1,\ldots,f_{n-2},f_{n-1})\,\,\,\in \,\,\, \mathcal{C}_d(\ell^{n-1}_-)\times D^{d-1}
\end{eqnarray*}
so that each $f_j\in S^{d-1}$ for $j=1,\ldots,n-2$, with $f_{n-1}\in D^{d-1}\subset S^{d-1}$ in a small disc centered at $-e_0$. In fact, $f_1,\ldots, f_{n-d}\in S^0$, and the $0$-th coordinate (the real coordinate in $\R^d=\R\times \C^k$) being negative for all $f_{n-1},\ldots,f_{n-(d-1)}$.

We can ignore the last coordinate and simply think of $(f_1,\ldots,f_{n-2})\in \mathcal{C}_d(\ell^{n-1}_-)$.

Denote by $g$ the composition of $f$ with the projection $p\colon \mathcal{C}_d(\ell)\to (S^{d-1})^{d-2}$ to the coordinates $(f_{n-(d-1)},\ldots,f_{n-2})$. Note that $\ell_{n-(d-1)}f_{n-(d-1)}+\cdots+\ell_{n-2}f_{n-2}=-ce_0$ for some fixed $c>0$. We can think of these $d-2$ coordinates as a robot arm starting at the origin and ending at $-ce_0$. We want to stretch out this robot arm until all coordinates point to $-e_0$. To do this we use the flow $\Phi$ of the standard gradient of the height function on $S^{d-1}$ which has $-e_0$ as its maximum and $e_0$ as its minimum. Consider $G\colon S^{d-2} \to [0,\infty) \to (S^{d-1})^{d-3}$ given by projecting $g$ down to the coordinates $(f_{n-(d-1)},\ldots,f_{n-3})$ and then applying the flow $\Phi$ to each of the $d-3$ coordinates. As we continue to flow, each coordinate approaches $-e_0$. We thus get an induced map $\bar{G}\colon D^{d-1} \to (S^{d-1})^{d-3}$ such that $\bar{G}(0)=(-e_0,\ldots,-e_0)$. Denote by $G_j(z)\in S^{d-1}$ the $j$-th coordinate of $G(z)$ 
for $z\in D^{d-1}$. Then there is a unique element $G_{d-2}(z)\in S^{d-1}$ whose $0$-coordinate is negative, and which ensures that
\begin{eqnarray*}
 \ell_{n-d+1}G_1(z)+\cdots + \ell_{n-3}G_{d-3}(z)+ \ell_{n-2}G_{d-2}(z) &=& -c(|z|)e_0
\end{eqnarray*}
where $c\colon [0,1] \to (0,\infty)$ is a monotonely decreasing function. Note that $G_{d-2}(z)$ exists by elementary geometry, and by the Implicit Function Theorem it depends smoothly on $z$. As the flow can be chosen to be invariant under the $\SO(d-1)$-action on $S^{d-1}$, the function $c$ only depends on $|z|$.

We may think of the points $(G_1(z),\ldots,G_{d-2}(z))\in (S^{d-1})^{d-2}$ as a robot arm depending on $z\in D^{d-1}$ which starts at the origin and has endpoint on the negative real axis in $\R\times \C^k$. Also, on $S^{d-2}$ this agrees with $g$. We need to extend this robot arm to a map $f\colon D^{d-1} \to \mathcal{C}_d(\ell^{n-1}_-)$. To do this, note that the coordinates $(f_1,\ldots,f_{n-d})$ are all in $S^0$. Note that $n-d\geq 3$ and there is at least one element $f_m$ with $f_m=-e_0$ (this is an element of the set $K$). Recall that $K\cup \{n-(d-1),\ldots,n-1\}$ is $\ell$-long, but removing any element of $K$ makes this set $\ell$-short. Now let $\gamma\colon [0,1] \to (S^{d-1})^{n-d}$ with $\gamma(0)=(f_1,\ldots,f_{n-d})$, so that in the $m$-th coordinate the point $-e_0$ is rotated into $e_0$ along $S^1\subset S^{d-1}$, and so that
\begin{eqnarray*}
 \ell_1\gamma_1(t)+\cdots \ell_{n-d}\gamma_{n-d}(t) &=& d(t)e_0
\end{eqnarray*}
with $d\colon [0,1]\to (0,\infty)$ a strictly monotone increasing map. Note that we only have to modify two extra coordinates beside $m$, so this is easily done. We can also do this so that
\[
 p(\gamma_1(t)),\ldots,p(\gamma_{n-d}(t))
\]
have rank at least $3$ for all $t\in (0,1)$ and for any projection $p\colon \R\times \C^k \to \R\times \C$ to one of the $\C$-coordinates of $\C^k$ (and keeping the $\R$-coordinate). For this we should rotate the $m$-th coordinate diagonally through $\C^k$ rather than through $\C\times \{0\}$, and similarly with the other coordinates.

We want to combine $\gamma$ and $G$ to the map $f\colon D^{d-1}\to \mathcal{C}_d(\ell^{n-1}_-)$, using the formula
\begin{eqnarray*}
 f(z)&=& (\gamma(s(|z|)),G_1(z),\ldots,G_{d-2}(z)).
\end{eqnarray*}
For some map $s\colon [0,1] \to [0,1]$ with $s(1)=0$. Note that both $\gamma$ and $G$ both end up on $\R\times \{0\}$ when adding up the coordinates, so we need to choose $s(|z|)$ with
\begin{eqnarray*}
 \ell_n-\ell_{n-1}+c(|z|)+d(s(|z|))&=& 0.
\end{eqnarray*}
Since $d$ is invertible on its image, and $\ell_{n-1}-c(|z|)-\ell_n$ in this image by the choice of the set $K$, we can find this $s$.

Because of the $(S^1)^k$-equivariance of $f$ on $S^{d-2}$ (note that $G$ is still equivariant on the interior of $D^{d-1}$, but $\gamma$ is not), we get the induced map $F\colon \CP^k \to \mathcal{M}_d(\ell)$ after restricting to the $S^1$-diagonal-action. We clearly have $F(\CP^k)\cap \mathcal{M}_d(\ell^{n-1}_-)=\{F(0)\}$, where $0\in \CP^k$ corresponds to $0\in D^{d-1}$. Note however that due to our construction, the rank of $F(0)$ is $3$, so it does not represent a regular point of $\mathcal{M}_d(\ell)$. In order to fix this, let us analyze ranks of images in more detail.

Consider the restriction $f|\colon S^{d-2} \to \mathcal{C}_d(\ell^{n-1}_-)$ which induces a map $F|\colon \CP^{k-1}\to \mathcal{M}_d(\ell^{n-1}_-)$. For any subset $A\subset \{1,\ldots,k\}$ with $A\not=\emptyset$ we have natural subspace $\CP_A\cong \CP^{|A|-1}$ consisting of those elements $[z_1:\cdots:z_k]$ with $z_i=0$ if $i\notin A$. These subspaces form a natural stratification of $\CP^{k-1}$
\[
 \CP^{k-1}_0 \subset \CP^{k-1}_2 \subset \cdots \subset \CP^{k-1}_{2k-4} \subset \CP^{k-1}_{2k-2}=\CP^{k-1}
\]
and by the construction of $f|$ the restriction $F|$ is a stratified map $F|\colon \CP^{k-1}\to \mathcal{M}_d(\ell^{n-1}_-)$ with
\[
 F|(\CP^{k-1}_{2i}-\CP^{k-1}_{2i-2})\,\,\,\subset \,\,\, \mathcal{N}_{2i+3}(\ell^{n-1}_-)-\mathcal{N}_{2i+2}(\ell^{n-1}_-) \mbox{ for }i\leq k-2
\]
and
\[
 F|(\CP^{k-1}-\CP^{k-1}_{2k-4})\,\,\,\subset \,\,\, \mathcal{M}_d(\ell^{n-1}_-)-\mathcal{N}_{d-2}(\ell^{n-1}_-).
\]
Note that $\CP^k$ has a similar stratification as $\CP^{k-1}$ using $A\subset \{1,\ldots,k+1\}$ with $A\not=\emptyset$, and the choice of $\gamma$ ensures that we also have a stratified map $F\colon \CP^k \to \mathcal{M}_d(\ell)$ with
\[
 F(\CP^k_{2i}-\CP^k_{2i-2})\,\,\,\subset \,\,\, \mathcal{N}_{2i+3}(\ell^{n-1}_-)-\mathcal{N}_{2i+2}(\ell^{n-1}_-) \mbox{ for }i\leq k-1
\]
and
\[
 F(\CP^k-\CP^k_{2k-2})\,\,\,\subset \,\,\, \mathcal{M}_d(\ell^{n-1}_-)-\mathcal{N}_{d-2}(\ell^{n-1}_-).
\]
We want to change $F$ by slightly changing $f\colon D^{d-1}\to \mathcal{C}_d(\ell^{n-1}_-)$ and we do this by using a diffeomorphism $\varphi$ of $D^{d-1}$ which sends $0$ to a point near $0$ that is send to a point of rank $d-1$ under $f$, and is the identity outside a small neighborhood of $0\in D^{d-1}$. The induced map $F\colon \CP^k\to \mathcal{M}_d(\ell)$ still has exactly one point in the intersection with $\mathcal{M}_d(\ell^{n-1}_-)$, namely $F(0)$, but this time the rank is $d-1$. We can alter $f\circ \varphi^{-1}$ to make it constant near $0$, thus ensuring that the intersection is also transverse.

By Lemma \ref{lem_strata} we get our element $Y=F_\ast[\CP^k]\in I^{\mathbf{p}_1'}H_{d-1}(\mathcal{M}_d(\ell))$ and it satisfies the conditions required in Lemma \ref{lem_geoconst}.

\end{document}